\theoremstyle{plain}
\newtheorem{theorem}{Theorem}
\newtheorem{proposition}{Proposition}
\theoremstyle{definition}
\newtheorem{remark}{Remark}
\theoremstyle{remark}
\numberwithin{equation}{section}
\begin{document}
	\begin{center}
	  {\bf\Large  RNA secondary structures having a compatible sequence of certain nucleotide ratios
		}
		\\
		\vspace{15pt}  Christopher L. Barrett, Thomas J. X. Li and Christian M. Reidys$^{\,\star}$
	\end{center}
	
	\begin{center}
		Biocomplexity Institute of Virginia Tech\\
		Blacksburg, VA 24061, USA\\
		Email: cbarrett@vbi.vt.edu, thomasli@vbi.vt.edu,  duckcr@vbi.vt.edu
	\end{center}
	
	\centerline{\bf Abstract}{\small
          Given a random RNA secondary structure, $S$, we study RNA sequences having fixed ratios of nuclotides
          that are compatible with $S$. We perform this analysis 
          for RNA secondary structures subject to various base pairing rules and minimum arc- and
          stack-length restrictions.
          Our main result reads as follows: in the simplex of the nucleotide ratios there exists a
          convex region in which, in the limit of long sequences, a random structure a.a.s.~has
          compatible sequence with these ratios and outside of which a.a.s.~a random structure has no such compatible
          sequence.
          We localize this region for RNA secondary structures subject to various base pairing rules and
          minimum arc- and stack-length restrictions.
          In particular, for {\bf GC}-sequences having a ratio of {\bf G} nucleotides smaller than $1/3$,
          a random RNA secondary structure without any minimum arc- and stack-length restrictions has
          a.a.s.~no such compatible sequence. For sequences having a ratio of {\bf G} nucleotides larger than
          $1/3$, a random RNA secondary structure has a.a.s. such compatible sequences.
          We discuss our results in the context of various families of RNA structures.
        }

		
		


{\bf Keywords}: RNA secondary structure, 
compatible sequence, nucleotide ratio, generating function, singularity analysis.


\section{Introduction}


This paper is motivated by the question whether there is ``more'' to DNA sequence data than the mere sequence
of nucleotides, identified by alignment methods. As DNA is transcribed into RNA and subsequently processed in
various ways, we ask here the question what effect the frequencies of the nucleotides have on the variety
of RNA structures that are compatible with the given sequence. Sequences are called compatible if they satisfy
the base pairing rules for all bonds of the structure.
In particular, for RNA secondary structures the
work of McCaskill~\citep{McCaskill}, allows to efficiently Boltzmann-sample such configurations.
Simple quantities such as the nucleotide ratios could be used to identify the functionality of embedded DNA sequences.

This paper provides the mathematical analysis of the fact that in the simplex of the nucleotide ratios there exists
a convex region in which, in the limit of long sequences, a random structure a.a.s.~has compatible sequences with these
ratios and outside of which a.a.s.~a random structure has no such compatible sequence.
Here a.a.s.~stands for asymptotically almost surely, i.e.~in the limit of long sequences, the probability tends to 1.
The nucleotide ratios are a discriminant of whether or not a random secondary structure has such a compatible
sequence and is thus accessible by means of folding. By random we mean a secondary structure that, in the limit of long
sequences, has the average number of arcs, see Theorem~\ref{T:arcclt} for details.
On the flip side, given a biologically relevant sequence, whose ratios are not within this window, we can test for
whether or not functionalities are achieved by other means, i.e.~for instance by interactions with either Proteins
or other RNA structures.

Before we proceed by providing some background on RNA secondary structures as contact structures and diagrams, we
validate our modeling Ansatz. As we shall show in Section~\ref{S:PT}, the key lies in a central limit theorem
for the number of arcs in biologically ``realistic'' RNA secondary structures. Here realistic is tantamount to RNA secondary
structures having minimal free energy. These structures do not contain bonds that force an extreme bending of the
backbone and they contain base pairs organized in stacks of length at least three. The latter reflects the fact that the main
contribution of lowering free energy stems from electron delocalization between stacked bonds~\citep{Hunter:90,Sponer:01,Sponer:13}.

The space of minimum free energy (mfe) RNA secondary structures can only be understood by
means of extensive computation. The space of {\it all} RNA secondary structures that satisfy the above mentioned
constraints, however, can be mathematically analyzed.
It comes down to the question of whether or not the number of arcs of mfe-structures can be obtained by uniformly sampling
RNA secondary structures satisfying minimum stack-length three and arc-length four constraints. In Figure~\ref{F:result} we contrast these two distributions.
The similarity of the two distributions allows us to pass to combinatorics and thus to prove statements in the
larger space.
\begin{figure}
	\centering
	\includegraphics[width=0.8\textwidth]{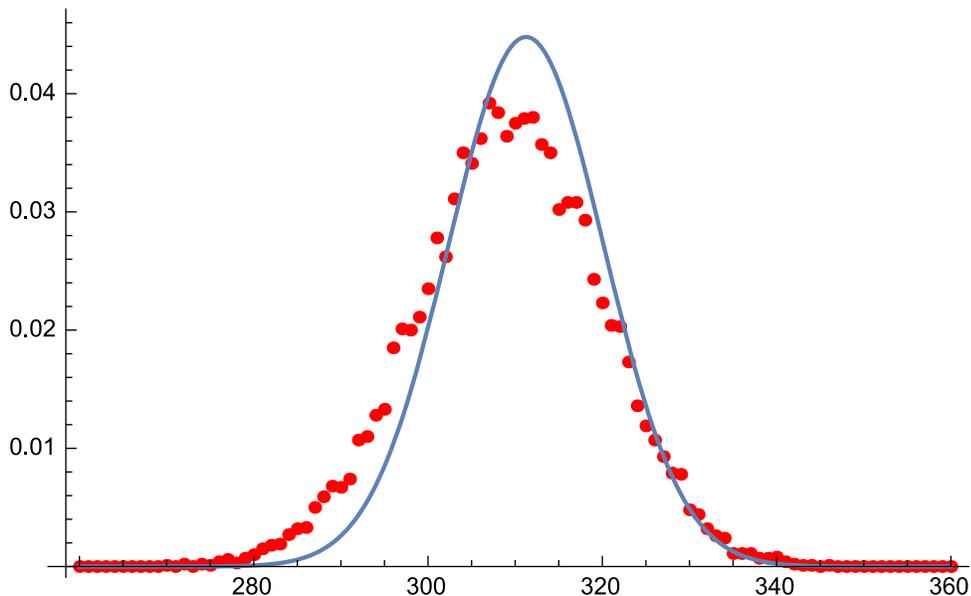}
	\caption
	    {\small The number of arcs in RNA secondary structures: we contrast the limit distribution of RNA
              secondary structures having minimum arc-length four and stack-length three (solid line) with
              the distribution of mfe-structures (dots),
              folded by \textsf{ViennaRNA}~\citep{Lorenz:11} from (uniformly sampled) random sequences of length $1000$.
              Here $n=1000$ to minimize finite size effects that would otherwise affect the distribution. 
	}
	\label{F:result}
\end{figure}

An RNA sequence is described by its primary structure, a linear oriented sequence of the nucleotides and can be viewed
as a string over the alphabet $\{\mathbf{A},\mathbf{U},\mathbf{G},\mathbf{C}\}$. An RNA strand folds by forming hydrogen
bonds between pairs of nucleotides according to Watson-Crick \textbf{A-U}, \textbf{C-G} and wobble \textbf{U-G} base-pairing
rules. The secondary structure encodes this bonding information of the nucleotides irrespective of the actual spacial embedding.
More than three decades ago, Waterman and colleagues pioneered the combinatorics and prediction of RNA secondary
structures~\citep{Waterman:78s,Waterman:79a,Waterman:78aa,Howell:80,Waterman:94a,Waterman:93}.

RNA secondary structures can be represented as diagrams, see Fig.~\ref{F:rnasec}. A \emph{diagram} is a labeled graph over
the vertex set $\{1, \dots, n\}$  whose vertices are arranged in a horizontal line and arcs are drawn in the upper half-plane.
Clearly, vertices and arcs correspond to nucleotides and base pairs, respectively. The number of nucleotides
is called the length of the structure. The length of an arc $(i,j)$ is defined as $j-i$ and an arc of length $k$ is called a $k$-arc.
The backbone of a diagram is the sequence of consecutive integers $(1,\dots,n)$ together with the edges $\{\{i,i+1\}\mid 1\le i\le n-1\}$.
We shall distinguish the backbone edge $\{i,i+1\}$ from the arc $(i,i+1)$, which we refer to as a \emph{$1$-arc}. 
Two arcs $(i_1,j_1)$ and $(i_2,j_2)$ are \emph{crossing} if $i_1<i_2<j_1<j_2$.  An
RNA \emph{secondary structure} is defined as a diagram satisfying the following three conditions~\citep{Waterman:78s}:
\begin{enumerate}
	\item \emph{non-existence of $1$-arcs}: if  $(i,j)$ is an arc, then $j-i\geq 2$,
	\item \emph{non-existence of base triples}: any two arcs do not have a common vertex, 
	\item \emph{non-existence of pseudoknots}: any two arcs are non-crossing, i.e., for two arcs $(i_1,j_1)$ and
	$(i_2,j_2)$ where $i_1<i_2$, $i_1<j_1$, and $i_2<j_2$, we have either $i_1<j_1<i_2<j_2$ or $i_1<i_2<j_2<j_1$.
\end{enumerate}
More succintly, an RNA secondary structure expressed as a diagram has exclusively noncrossig arcs in the upper half-plane.
Pairs of nucleotides may form Watson-Crick \textbf{A-U}, \textbf{C-G} and wobble \textbf{U-G} bonds labeling the above mentioned
arcs. 
\begin{figure}
	\centering
	\includegraphics[width=0.8\textwidth]{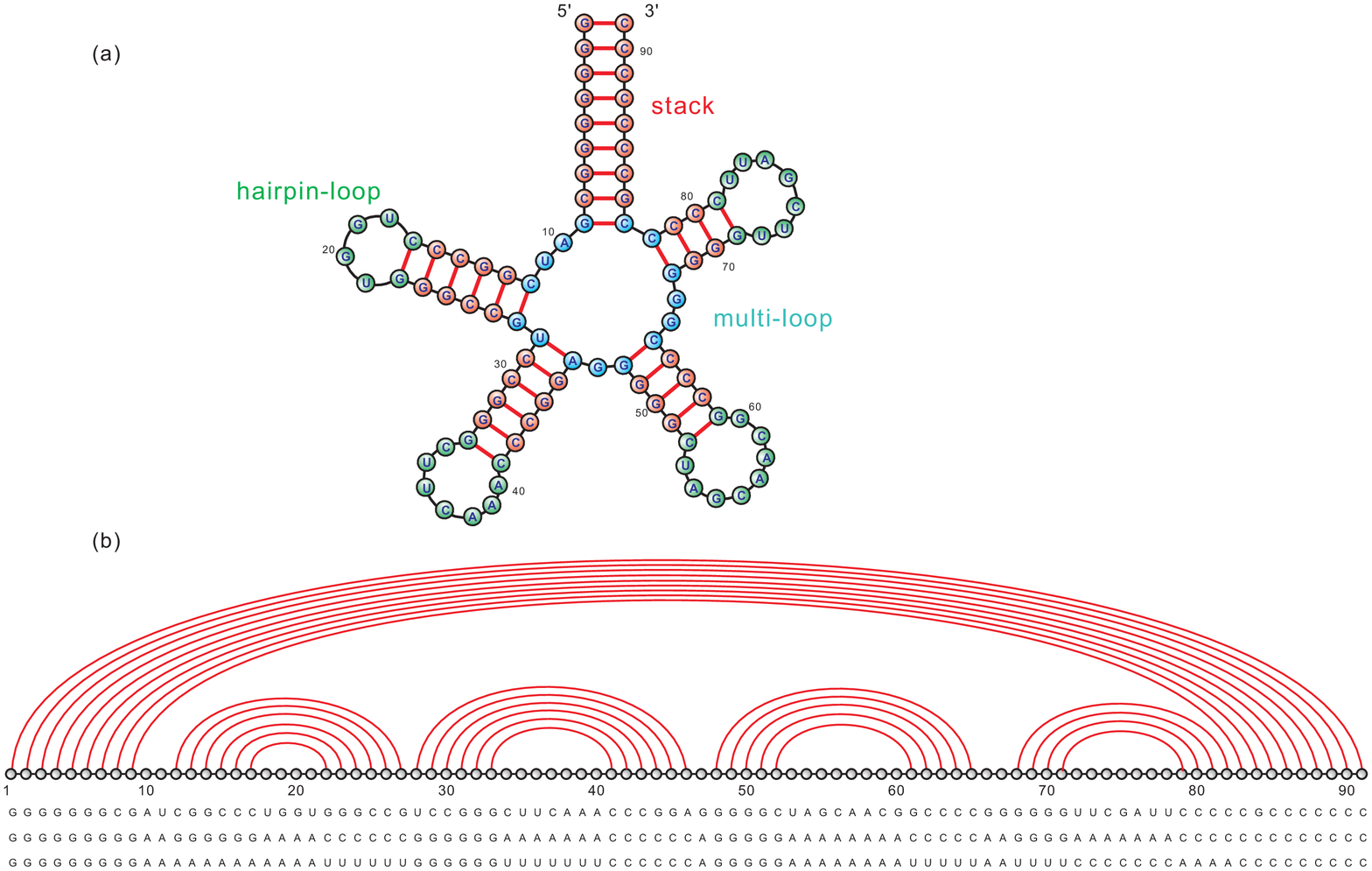}
	\caption
	{\small An RNA secondary structure represented as a contact graph (a) with hydrogen bonds as red edges
	  and as a diagram (b) having minimum stack-length $4$ and arc-length $5$. In (b), we display three compatible
          sequences
          having the $\mathbf{AUCG}$-ratios $(0.088,0.099,0.417,0.396)$, $(0.340,0.000,0.330,0.330)$ and 
$(0.296,0.242,0.242,0.220)$, respectively.
	}
	\label{F:rnasec}
\end{figure}

This paper is organized as follows: In Section~\ref{S:Back}, we provide some basic facts of singularity analysis of power series.
In Section~\ref{S:second}, we
compute the bivariate generating function of RNA secondary structures of length $n$ having $l$ arcs and derive its asymptotic
expansion
as well as the central limit theorem for the distribution of arcs. In Section~\ref{S:PT}, we analyze the fraction of
RNA secondary structures having compatible sequences and prove our main results. We conclude with Section~\ref{S:Dis},
where we integrate and discuss our findings.


\section{Singularity analysis}\label{S:Back}

RNA secondary structures as combinatorial objects give rise to generating functions,
i.e.~formal power series whose coefficients count the number of structures for given sequence length.
The estimation of these coefficients will be at the heart of this analysis and we next discuss
some basic facts of how to compute them. The key tool here is singularity
analysis: facilitated by Cauchy-integration along certain contours passing close by the dominant
critical point(s). Singularity analysis is described in great detail in the book  by Flajolet and
Sedgewick~\citep{Flajolet:07a}. In the following, we showcase all facts needed to exact the
asymptotics of coefficients from our generating functions.

Let $f(x)=\sum_{n\geq 0}a_n\, x^n$ be a combinatorial generating function. We are  interested in the
calculation of $a_n$ in terms of their exponential growth rate, $\gamma$ and their subexponential
factor $P(n)$, that is, $a_n\sim P(n)\,\gamma^n$.
The key for obtaining the asymptotic information about the coefficients of a generating function is
to locate its dominant singularities. For the particular case of power series with nonnegative coefficients
with a radius of convergence $R>0$, Pringsheim's theorem  \citep{Flajolet:07a,Tichmarsh:39} guarantees a
(unique) positive, real, dominant singularity at $z=R$. In our case all generating functions are by
construction combinatorial, whence we always have such a positive real, dominant, singularity.

A function $f(x)$ is $\Delta_\rho$ analytic at its dominant singularity $x=\rho$, if it is analytic in some
domain $\Delta_\rho(\phi,d)=\{ x\mid \vert x\vert < d, x\neq \rho,\, \vert
{\rm Arg}(x-\rho)\vert >\phi\}$, for some $\phi,d$, where $d>|\rho|$
and $0<\phi<\frac{\pi}{2}$. We set
\begin{equation*}
\left(f(x)=O\left(g(x)\right) \
\text{\rm as $x\rightarrow \rho$}\right)\  \Longleftrightarrow \
\left(f(x)/g(x)\ \text{\rm is bounded as $x\rightarrow \rho$}\right),
\end{equation*}
Noting that for any $\gamma\in\mathbb{C}\setminus 0$,
\begin{equation*}
[x^n]f(x)=\gamma^n
[x^n]f\left(\frac{x}{\gamma}\right),
\end{equation*}
we can, without loss of generality, reduce our analysis to the case
where $x=1$ is the unique dominant singularity. The following transfer-theorem allows us to derive the
asymptotics of coefficients from the asymptotic expansion of its generating function around its dominant
singularity.
\begin{theorem}[\citet{Flajolet:07a}, Theorem VI.3, pp. 390]\label{T:transfer1}
	Let $f(x)$ be a $\Delta_1$ analytic function at its unique dominant
	singularity $x=1$. Let
	\[g(x)=(1-x)^{\alpha}\log^{\beta}\left(\frac{1}{1-x}\right)
	,\quad\alpha,\beta\in \mathbb{R}.
	\]
	That is we have in the intersection of a neighborhood of $1$
	\begin{equation*}
	f(x) = O(g(x)) \quad \text{\it for } x\rightarrow 1.
	\end{equation*}
	Then we have
	\begin{equation*}
	[x^n]f(x)= O\left([x^n]g(x)\right).
	\end{equation*}
\end{theorem}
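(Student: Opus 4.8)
The plan is to follow the standard singularity-analysis argument: express $[x^n]f(x)$ as a Cauchy integral over a contour hugging the dominant singularity $x=1$, split the contour into pieces, and bound the contribution of each piece using the hypothesis $f(x)=O(g(x))$. First I would write
\[
[x^n]f(x)=\frac{1}{2\pi i}\oint_{\gamma} f(x)\,\frac{dx}{x^{n+1}},
\]
and then deform $\gamma$ into a contour $\gamma = \gamma_1\cup\gamma_2\cup\gamma_3\cup\gamma_4$ inside the domain $\Delta_1(\phi,d)$ on which $f$ is analytic: a small circle of radius $1/n$ around $x=1$ (the ``notch''), two rectilinear segments emanating from near $1$ at angle $\pm\psi$ with $\phi<\psi<\pi/2$, and an outer arc of radius $1+\eta$ for some fixed small $\eta>0$. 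Analyticity of $f$ on $\Delta_1$ guarantees the deformation is legitimate and introduces no residue other than at $x=0$.

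The core estimates proceed piece by piece. On the outer arc $\gamma_4$, the integrand is bounded by a constant times $(1+\eta)^{-n}$, which is exponentially small and hence negligible compared with the target $[x^n]g(x)$, which decays only polynomially (times possibly a logarithmic factor). On the inner notch $\gamma_1$, one parametrizes $x=1+(t/n)e^{i\theta}$ and uses $f(x)=O(g(x))$ together with the explicit form $g(x)=(1-x)^\alpha\log^\beta(1/(1-x))$ to bound $|f(x)|$ by $O(n^{-\alpha}\log^\beta n)$ on a set of length $O(1/n)$, while $|x^{-n-1}|=O(1)$; this yields a contribution $O(n^{-\alpha-1}\log^\beta n)$. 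On the rectilinear segments $\gamma_2,\gamma_3$, with $x=1+(t/n)e^{i\psi}$, $t$ ranging over $[1,\,n\delta]$ for fixed $\delta$, one again substitutes the bound for $|f|$, uses $|x|^{-n-1}\le e^{-ct}$ for some $c>0$ (since $\psi<\pi/2$ the real part of $e^{i\psi}$ is positive, giving geometric-type decay in $t$), and integrates; the resulting integral $\int_1^\infty t^{-\alpha}(\log t)^{\beta} e^{-ct}\,dt$ converges, producing again a $O(n^{-\alpha-1}\log^\beta n)$ bound after the change of variables. Assembling the four pieces gives $[x^n]f(x)=O(n^{-\alpha-1}\log^\beta n)$.

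Finally I would invoke the already-available asymptotic expansion of the coefficients of the standard scale functions, namely $[x^n]g(x)=[x^n](1-x)^\alpha\log^\beta(1/(1-x)) \sim \frac{n^{-\alpha-1}}{\Gamma(-\alpha)}(\log n)^{\beta}$ (with the usual interpretation when $\alpha\in\mathbb{Z}_{\ge 0}$), so that the bound from the contour integration is exactly of order $[x^n]g(x)$, which is the claimed conclusion $[x^n]f(x)=O([x^n]g(x))$. The main obstacle, and the only place requiring genuine care, is the estimate on the rectilinear segments: one must choose the angle $\psi$ strictly between $\phi$ and $\pi/2$ so that the segments stay inside $\Delta_1$ while still guaranteeing a uniform exponential decay factor $|x|^{-n}\lesssim e^{-ct}$ in the rescaled variable, and one must verify that the transition between the $O(1/n)$-scale notch and the $O(1)$-scale outer arc is handled uniformly so that no hidden $n$-dependence creeps into the implied constants.
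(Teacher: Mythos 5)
The paper does not prove this statement---it is quoted verbatim from Flajolet and Sedgewick (Theorem VI.3)---and your outline reproduces exactly the standard Hankel-contour proof given there: Cauchy's formula over a contour consisting of an $O(1/n)$-notch around $x=1$, two rays at an angle $\psi$ with $\phi<\psi<\pi/2$, and an outer arc, with the hypothesis $f=O(g)$ transferred piecewise and the result compared against $[x^n]g(x)\sim n^{-\alpha-1}(\log n)^{\beta}/\Gamma(-\alpha)$. The plan is correct; the only cosmetic slips are in the rectilinear-segment integrand, where the rescaled bound should read $t^{\alpha}\bigl(\log(n/t)\bigr)^{\beta}e^{-ct}$ rather than $t^{-\alpha}(\log t)^{\beta}e^{-ct}$, which affects neither the convergence of the integral nor the resulting $O\bigl(n^{-\alpha-1}\log^{\beta}n\bigr)$ estimate.
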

\begin{theorem}[\citet{Flajolet:07a}]\label{T:transfer2}
	Suppose $f(x)=(1-x)^{-\alpha}$, $\alpha\in\mathbb{C}\setminus
	\mathbb{Z}_{\le 0}$, then
	\begin{align*}
	[x^n]f(x) \sim & \frac{n^{\alpha-1}}{\Gamma(\alpha)}\left[
	1+\frac{\alpha(\alpha-1)}{2n}+\frac{\alpha(\alpha-1)(\alpha-2)(3\alpha-1)}
	{24 n^2}+\right. \\
	&
	\left.\frac{\alpha^2(\alpha-1)^2(\alpha-2)(\alpha-3)}{48n^3}+
	O\left(\frac{1}{n^4}\right)\right].
	\end{align*}
\end{theorem}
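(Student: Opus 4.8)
The plan is to reduce the statement to the classical asymptotic expansion of a ratio of Gamma functions, starting from the exact value of the coefficient. By the generalized binomial theorem,
\[
[x^n](1-x)^{-\alpha} \;=\; \binom{n+\alpha-1}{n} \;=\; \frac{\alpha(\alpha+1)\cdots(\alpha+n-1)}{n!} \;=\; \frac{\Gamma(n+\alpha)}{\Gamma(\alpha)\,\Gamma(n+1)},
\]
where the last equality telescopes the rising factorial via $\Gamma(z+1)=z\,\Gamma(z)$ and is valid for every $\alpha\in\mathbb C\setminus\mathbb Z_{\le0}$. This already isolates the prefactor $1/\Gamma(\alpha)$ appearing in the claim, so everything comes down to expanding $\Gamma(n+\alpha)/\Gamma(n+1)$ in descending powers of $n$.

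First I would apply Stirling's series $\log\Gamma(z)=(z-\tfrac12)\log z-z+\tfrac12\log(2\pi)+\tfrac1{12z}-\tfrac1{360 z^3}+O(z^{-5})$ to $z=n+\alpha$ and to $z=n+1$, subtract, and substitute the Taylor expansions $\log(1+\alpha/n)=\alpha/n-\alpha^2/(2n^2)+\alpha^3/(3n^3)-\cdots$ and $\log(1+1/n)=1/n-1/(2n^2)+\cdots$, retaining enough terms (Stirling through the $1/z^3$ term, and the logarithms through order $1/n^4$ where they are multiplied by $n$) to control the exponent through order $1/n^3$. Collecting terms gives
\[
\log\frac{\Gamma(n+\alpha)}{\Gamma(n+1)}=(\alpha-1)\log n+\frac{\alpha(\alpha-1)}{2n}+O\!\left(\frac1{n^2}\right),
\]
and, after exponentiating, $\Gamma(n+\alpha)/\Gamma(n+1)=n^{\alpha-1}\bigl(1+c_1 n^{-1}+c_2 n^{-2}+c_3 n^{-3}+O(n^{-4})\bigr)$. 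Determining $c_1,c_2,c_3$ is then a finite computation, and checking $c_1=\tfrac{\alpha(\alpha-1)}{2}$, $c_2=\tfrac{\alpha(\alpha-1)(\alpha-2)(3\alpha-1)}{24}$, $c_3=\tfrac{\alpha^2(\alpha-1)^2(\alpha-2)(\alpha-3)}{48}$ completes the proof. (Alternatively, one recognizes these coefficients through the N\"orlund expansion of $\Gamma(n+a)/\Gamma(n+b)$ in terms of generalized Bernoulli polynomials and quotes it directly.)

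The main obstacle is purely the bookkeeping in this last step: one must carry two Stirling expansions and several logarithmic Taylor series simultaneously and expand a product to third order in $1/n$, which is mechanical but error-prone, so the factorized forms of $c_2,c_3$ should be verified rather than guessed. A secondary point worth noting is uniformity: the remainder in Stirling's series and the expansion of $\log(1+\alpha/n)$ are uniform for $\alpha$ in compact subsets of $\mathbb C\setminus\mathbb Z_{\le0}$, which is exactly what one needs for the $O(n^{-4})$ term to hold uniformly on the parameter ranges used later, and the exclusion of $\mathbb Z_{\le0}$ is precisely what keeps $n^{\alpha-1}/\Gamma(\alpha)$ from degenerating. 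An alternative route, more in the spirit of this section, is to start from the Cauchy integral $[x^n]f(x)=\tfrac1{2\pi i}\oint f(x)\,x^{-n-1}\,dx$, deform the contour onto a Hankel loop around $x=1$, substitute $x=1+t/n$, and recover the expansion from Hankel's integral representation of $1/\Gamma$; there the obstacle shifts to bounding the part of the contour bounded away from $x=1$, which is routine.
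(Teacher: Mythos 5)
Your proposal is correct: the paper gives no proof of this statement, citing it directly from Flajolet and Sedgewick (it is their Theorem VI.1), and your route --- the exact coefficient $\binom{n+\alpha-1}{n}=\Gamma(n+\alpha)/\bigl(\Gamma(\alpha)\Gamma(n+1)\bigr)$ followed by the Stirling expansion of the Gamma-function ratio --- is precisely the standard argument behind the cited result, with the Hankel-contour alternative you mention being the other textbook derivation. The stated coefficients $c_1,c_2,c_3$ and your remark on uniformity in $\alpha$ (needed when the theorem is invoked with $y$, hence $\alpha$-data, varying near $1$) are accurate, so nothing further is required.
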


The next result extracts asymptotics of generating functions, satisfying polynomial
equations.
It is based on the method of Newton polygons and derives the Newton-Puiseux expansion and
the resultant of two polynomials to locate the dominant singularity. The resultant of two
polynomials $f(z)=a_n\prod_{i=1}^n (z-\alpha_i)$ and $g(z)=b_m\prod_{j=1}^m (z-\beta_j)$ is
given by
\[
\mathbf{R}(f,g,z)= a_n^m b_m^n\prod_{i=1}^n \prod_{j=1}^m (\alpha_i-\beta_j).
\]
\begin{theorem}\label{T:AsymG}
  Let $z(x)=\sum_{n\geq 0}z_n x^n$ be a generating function, analytic at $0$, that satisfies
  the polynomial equation $\Phi(x,z)=0$.  Let
	$
	\Delta(x)= \mathbf{R} \left(\Phi(x,z), \frac{\partial}{\partial z}\Phi(x,z),z
	\right)$. Suppose \\
	      {\bf(1)} there exist positive numbers $\rho$ and $\pi=z(\rho)$, such that  $\rho$ is a
              root of the resultant $\Delta(x)$ and $(\rho,\pi)$ satisfies the system of equations,
	\begin{equation}\label{E:phi1}
	\Phi(\rho,\pi)=0,\quad \Phi_z(\rho,\pi)=0,
	\end{equation}
	{\bf(2)} $\Phi(x,z)$ satisfies the conditions:
	\begin{equation}\label{E:phi2}
	\Phi_x(\rho,\pi)\neq 0,\quad \Phi_{z z}(\rho,\pi)\neq 0,
	\end{equation}	
        {\bf (3)} $z(x)$ is aperiodic, i.e., there exist three indices $i<j<k$ such that
        $z_i z_j z_k \neq 0$ and $\gcd(j-i,k-i)=1$.

        Then $\rho$ is the dominant singularity of  $z(x)$ and
	$z(x)$ has the following expansion at $\rho$
	\begin{equation}\label{E:asym1}
	z(x)=\pi+ \delta (\rho-x)^{\frac{1}{2}}+O(\rho-x),\quad \text{for some nonxero
		constant } \delta.
	\end{equation}
	Furthermore the coefficients of $z(x)$ satisfy
	\[
	[x^n]z(x) \sim  c \, n^{-\frac{3}{2}} \rho^{-n}, \quad n\rightarrow \infty,
	\]
	for some constant $c>0$.
\end{theorem}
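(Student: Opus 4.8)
The plan is to follow the classical singularity-analysis recipe for algebraic (or more generally implicitly-defined) generating functions: locate the dominant singularity from the vanishing of the discriminant, certify that the branch point is of square-root type, and then transfer to coefficient asymptotics. First I would establish that $z(x)$ has a finite, positive radius of convergence. Since $z(x)$ is combinatorial with nonnegative coefficients, Pringsheim's theorem gives a real positive dominant singularity $\tau = \tau(z)$; I would then argue that $\tau$ is finite because the coefficients grow at least geometrically (this follows from aperiodicity in (3) together with $z$ being nontrivial), so the series is not entire. The singularities of a function satisfying $\Phi(x,z)=0$ can only occur where the implicit function theorem fails, i.e.\ where $\Phi_z(x,z(x)) = 0$; combined with $\Phi(x,z(x))=0$ this forces $x$ to be a root of the resultant $\Delta(x) = \mathbf{R}(\Phi,\Phi_z,z)$. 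Hence $\tau$ is among the (finitely many) positive real roots of $\Delta$, and hypothesis (1) identifies $(\rho,\pi)$ as the relevant such root with $\pi = z(\rho)$; a monotonicity/continuity argument along $[0,\rho)$ shows $z(x)$ stays analytic there, so in fact $\tau = \rho$.

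Next I would derive the local expansion \eqref{E:asym1}. Near $(\rho,\pi)$ we have $\Phi(\rho,\pi)=0$ and $\Phi_z(\rho,\pi)=0$ but, by hypothesis (2), $\Phi_x(\rho,\pi)\neq 0$ and $\Phi_{zz}(\rho,\pi)\neq 0$. Taylor-expanding $\Phi$ to second order about $(\rho,\pi)$ gives
\[
0 = \Phi_x(\rho,\pi)(x-\rho) + \tfrac12\Phi_{zz}(\rho,\pi)(z-\pi)^2 + \text{higher order terms},
\]
so that $(z-\pi)^2 \sim -\frac{2\Phi_x(\rho,\pi)}{\Phi_{zz}(\rho,\pi)}(x-\rho)$. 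Because the coefficients of $z$ are nonnegative and $x=\rho$ is the dominant singularity, the relevant branch is the one that is real and increasing on $(0,\rho)$, which forces the quantity under the square root to be positive as $x\to\rho^-$, yielding $z(x) = \pi + \delta(\rho-x)^{1/2} + O(\rho-x)$ with $\delta \neq 0$ a determined constant (the Newton--Puiseux / Weierstrass-preparation machinery makes this rigorous, guaranteeing a genuine Puiseux series with leading exponent $1/2$). Aperiodicity (3) ensures $\rho$ is the \emph{unique} dominant singularity, so there are no other points on $|x|=\rho$ to account for.

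Finally, I would invoke the transfer theorem. The expansion shows $z(x)$ is $\Delta_\rho$-analytic and, after the rescaling $x \mapsto x/\rho$ noted in the excerpt, one may assume the dominant singularity is at $1$; the singular part $\delta(\rho-x)^{1/2}$ corresponds to $\alpha = -1/2$ in Theorem~\ref{T:transfer2}, while the $O(\rho-x)$ remainder is analytic (or at worst contributes a negligible $O(n^{-2}\rho^{-n})$ term) and is handled by Theorem~\ref{T:transfer1}. Theorem~\ref{T:transfer2} with $\alpha = -1/2$ gives $[x^n](1-x/\rho)^{1/2} \sim \frac{n^{-3/2}}{\Gamma(-1/2)}\rho^{-n}$, hence
\[
[x^n]z(x) \sim c\, n^{-3/2}\rho^{-n}, \qquad n\to\infty,
\]
with $c = -\delta\,\sqrt{\rho}\,/(2\sqrt{\pi}) > 0$ once the sign of $\delta$ is pinned down by positivity of the coefficients. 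The main obstacle is the rigorous justification that the dominant singularity is exactly at $\rho$ (rather than some smaller root of $\Delta$) and that the branch of the algebraic function picked out by the power series $\sum z_n x^n$ is precisely the square-root branch described — this requires carefully tracking analytic continuation of $z(x)$ along the positive real axis and using the Newton polygon to exclude other Puiseux exponents; the remaining steps are routine applications of the transfer theorems.
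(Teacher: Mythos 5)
Your proposal is correct and follows essentially the same route the paper takes: the paper does not prove Theorem~\ref{T:AsymG} in detail but cites \citet{Flajolet:07a} and \citet{Hille:62}, sketching exactly your argument — the resultant locates the candidate singularity, conditions (1) and (2) force the Newton--Puiseux exponent $\tfrac{1}{2}$, aperiodicity gives uniqueness of the dominant singularity, and Theorems~\ref{T:transfer1} and~\ref{T:transfer2} transfer the expansion \eqref{E:asym1} to coefficient asymptotics. Your additional care about identifying $\rho$ as the actual radius of convergence (note that the hypothesis $\pi=z(\rho)$ already forces the radius to be at least $\rho$) and about the sign of $\delta$ is sound.
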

The proof can be found in \citet{Flajolet:07a} or alternatively in \citet[pp. 103]{Hille:62}.
The key point here is, that when applying Newton's polygon method to determine the type of expansion
of $z(x)$: conditions (1) and (2) guarantee the first exponent of $x$ to be $\frac{1}{2}$. The asymptotics
of the coefficients follows then from eq.~(\ref{E:asym1}) as an  application of the transfer
theorem~\ref{T:transfer1} and theorem~\ref{T:transfer2}.


For a bivariate generating function $F(x,y)$, satisfying a functional equation, the next theorem allows us to
obtain key information about the singular expansion of $F(x,y)$, considering the latter as a univariate
generating function parameterized by $y$.
\begin{theorem}[\citet{Flajolet:07a}]\label{T:AsymAlgPra}
	Let $F(x,y)$ be a bivariate function that is analytic at $(0,0)$ and has
	non-negative coefficients. Suppose that $F(x,y)$ is one of the solutions $z$
	of a polynomial equation
	\[
	\Phi(x,y,z)=0,
	\]
	where $\Phi$ is a polynomial of degree $\geq 2$ in $z$, such that $\Phi(x,1,z)$ satisfies the
	conditions of Theorem~\ref{T:AsymG}. Let
	\[
	\Delta(x,y)= \mathbf{R} \left(\Phi(x,y,z),
	\frac{\partial}{\partial z}\Phi(x,y,z),z \right)
	\]
	and let $\rho$ be the root of $\Delta(x,1)$, such that $z(x):=F(x,1)$ is
	singular at $x =\rho $ and $z(\rho)=\pi$. Suppose
	there exists some $\rho(y)$ being the unique root of the equation
	\[
	\Delta(\rho(y),y)=0,
	\]
	analytic at $1$ and such that $\rho(1)=\rho$.

        Then $F(x,y)$ has the singular expansion
	\[
	F(x,y)= \pi(y) + \delta(y)\, \left(\rho(y)-x \right)^{\frac{1}{2}} \left(1+ o(1) \right),
	\]
	where $\pi(y)$ and $\delta(y)$ are analytic at 1 such that
	$\pi(1)=\pi$ and $\delta(1) \neq 0$. In addition we have
	\[
	[x^n]F(x,y) = c(y)\, n^{-\frac{3}{2}} \rho(y)^{-n}\left(1+O\left(\frac{1}{n}\right)\right),
	\]
	uniformly for $y$ restricted to a small neighborhood of $1$, where $c(y)$ is
	continuous and nonzero near 1.
\end{theorem}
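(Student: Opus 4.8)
The plan is to treat $F(x,y)$ as an analytic perturbation, in the parameter $y$, of the univariate function $z(x)=F(x,1)$, whose square-root singularity at $x=\rho$ is already supplied by Theorem~\ref{T:AsymG}. First I would record the local data at the base point: since $\Phi(x,1,z)$ satisfies the hypotheses of Theorem~\ref{T:AsymG}, we have $\Phi(\rho,1,\pi)=0$ and $\Phi_z(\rho,1,\pi)=0$, while $\Phi_x(\rho,1,\pi)\neq 0$ and $\Phi_{zz}(\rho,1,\pi)\neq 0$. The Jacobian of the system $\{\Phi=0,\ \Phi_z=0\}$ with respect to $(x,z)$ at $(\rho,1,\pi)$ equals $\Phi_x\Phi_{zz}-\Phi_z\Phi_{zx}=\Phi_x\Phi_{zz}\neq 0$, so the implicit function theorem produces functions $\rho(y)$ and $\pi(y)$, analytic near $y=1$, with $\rho(1)=\rho$, $\pi(1)=\pi$, solving $\Phi(\rho(y),y,\pi(y))=0$ and $\Phi_z(\rho(y),y,\pi(y))=0$. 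By construction $\rho(y)$ is a root of $\Delta(\cdot,y)$, and the uniqueness hypothesis identifies it with the prescribed $\rho(y)$; shrinking the neighborhood, $\Phi_x$ and $\Phi_{zz}$ stay nonzero along $(\rho(y),y,\pi(y))$ by continuity.

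Next I would extract the local singular expansion. Taylor-expanding $\Phi(x,y,z)$ around $(\rho(y),\pi(y))$ and using $\Phi=\Phi_z=0$ there gives
\begin{equation*}
\Phi(x,y,z)=\Phi_x\,(x-\rho(y))+\tfrac{1}{2}\Phi_{zz}\,(z-\pi(y))^2+\big(\text{higher order}\big),
\end{equation*}
the partials being evaluated at $(\rho(y),y,\pi(y))$. By the Weierstrass preparation theorem — equivalently, Newton's polygon / Newton--Puiseux algorithm carried out with $y$ as an analytic parameter, exactly as in the proof of Theorem~\ref{T:AsymG} — the solutions $z$ near $\pi(y)$ form a cycle of two conjugate branches with
\begin{equation*}
z-\pi(y)=\pm\,\delta(y)\,(\rho(y)-x)^{1/2}\big(1+o(1)\big),\qquad \delta(y)=\Big(\tfrac{2\Phi_x(\rho(y),y,\pi(y))}{\Phi_{zz}(\rho(y),y,\pi(y))}\Big)^{1/2},
\end{equation*}
so that $\delta(y)$ is analytic and nonvanishing near $1$ and $\delta(1)\neq 0$. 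To conclude that $F(x,y)$ is precisely this branch I would argue as in the case $y=1$: $F(\cdot,y)$ is analytic at $x=0$ with nonnegative coefficients, hence has a positive dominant singularity by Pringsheim's theorem; the only possible finite singularities are the roots of $\Delta(\cdot,y)$ (and zeros of the leading coefficient of $\Phi$ in $z$), which move continuously with $y$; and aperiodicity (condition (3) of Theorem~\ref{T:AsymG}) makes $\rho$ the unique dominant singularity at $y=1$. By continuity, $\rho(y)$ remains the unique dominant singularity of $F(\cdot,y)$ for $y$ in a small neighborhood of $1$, and $F(\cdot,y)$ is $\Delta$-analytic at $x=\rho(y)$ there.

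Finally I would invoke singularity analysis. From the $\Delta$-analyticity together with $F(x,y)=\pi(y)+\delta(y)(\rho(y)-x)^{1/2}(1+o(1))$, Theorems~\ref{T:transfer1} and~\ref{T:transfer2} (the latter with $\alpha=-\tfrac{1}{2}$, whence $[x^n](1-x)^{1/2}\sim-\tfrac{1}{2\sqrt\pi}\,n^{-3/2}$) give
\begin{equation*}
[x^n]F(x,y)=c(y)\,n^{-3/2}\rho(y)^{-n}\Big(1+O\big(\tfrac{1}{n}\big)\Big),\qquad c(y)=-\frac{\delta(y)\sqrt{\rho(y)}}{2\sqrt\pi},
\end{equation*}
which is continuous and nonzero near $1$ since $\delta(y)\neq 0$ and $\rho(y)>0$. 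The main obstacle is the \emph{uniformity} in $y$: one must pick a single $\Delta$-domain and a single Hankel-type Cauchy contour that serve every $y$ in a fixed compact neighborhood of $1$, and bound the remainder in the transfer step uniformly. This is possible because all the relevant ingredients — $\rho(y)$, $\pi(y)$, $\delta(y)$, the remaining branch points of $\Delta(\cdot,y)$, and hence the admissible aperture and radius of the $\Delta$-domain — depend continuously on $y$, so passing to a small enough compact neighborhood of $1$ makes the estimates simultaneous; this is exactly the uniform version of singularity analysis established in \citet{Flajolet:07a}.
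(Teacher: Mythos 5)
The paper does not actually prove this statement: it is imported verbatim from \citet{Flajolet:07a}, with the remark that it follows from Proposition IX.17 and Theorem IX.12 there, so there is no in-paper argument to compare against line by line. Your sketch is, however, a faithful reconstruction of the standard perturbation proof behind that citation: the nonvanishing Jacobian $\Phi_x\Phi_{zz}$ of the system $\{\Phi=0,\Phi_z=0\}$ yields analytic $\rho(y),\pi(y)$ by the implicit function theorem, the parametric Newton--Puiseux/Weierstrass step yields the square-root cycle with $\delta(y)=\bigl(2\Phi_x/\Phi_{zz}\bigr)^{1/2}\neq 0$, and uniform singularity analysis transfers this to the coefficients. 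Two points deserve tightening. First, the stated relative error $O(1/n)$ does not follow from the $\bigl(1+o(1)\bigr)$ form of the singular expansion alone (that only gives $o(n^{-3/2}\rho(y)^{-n})$); you need the next terms of the Puiseux expansion, $F(x,y)=\pi(y)+\delta(y)(\rho(y)-x)^{1/2}+c_1(y)(\rho(y)-x)+O\bigl((\rho(y)-x)^{3/2}\bigr)$, which your Newton-polygon step does in fact deliver, so this is a presentational gap rather than a mathematical one. Second, Pringsheim's theorem only applies for real positive $y$, whereas the uniformity claim (which is what Theorem~\ref{T:normal} consumes via $y=e^s$, $s$ complex) requires controlling the dominant singularity for complex $y$ near $1$; there you must lean entirely on the continuity of the branch points of $\Delta(\cdot,y)$ and the strict modulus gap at $y=1$ guaranteed by aperiodicity, as you indicate in your final paragraph. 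With those two clarifications the argument is complete and is essentially the proof Flajolet and Sedgewick give.
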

Theorem~\ref{T:AsymAlgPra} is implied by Proposition IX. 17 and Theorem
IX. 12 of~\citet{Flajolet:07a}.



We shall end this section by stating the following central limit theorem due to Bender \citep{Bender:73}:
\begin{theorem}[\citet{Bender:73}]\label{T:normal}
	Suppose we are given the bivariate generating function
	\begin{equation*}
	f(x,u)=\sum_{n,t\geq 0}f(n,t)\,x^n\,u^t,
	\end{equation*}
	where $f(n,t)\geq 0$
	and $f(n)=\sum_tf(n,t)$. Let $\mathbb{X}_n$ be a r.v.~such that
	$\mathbb{P}(\mathbb{X}_n=t)=f(n,t)/f(n)$. Suppose
	\begin{equation*}
	[x^n]f(x,e^s)= c(s) \, n^{\alpha}\,  \gamma(s)^{-n}
	\left(1+ O\left( \frac{1}{n}\right) \right),
	\end{equation*}
	uniformly in $s$ in a neighborhood of $0$, where $c(s)$ is continuous
	and nonzero near $0$, $\alpha$ is a constant, and $\gamma(s)$ is analytic
	near $0$.
        
	Then there exists a pair $(\mu,\sigma)$ such that the normalized
	random variable
	\begin{equation*}
	\mathbb{X}^*_{n}=\frac{\mathbb{X}_{n}- \mu \, n}{\sqrt{{n\,\sigma}^2 }},
	\end{equation*}
	converges in distribution to a Gaussian variable with a speed of convergence
	$O(n^{-\frac{1}{2}})$.
	That is we have
	\begin{equation*}
	\lim_{n\to\infty}
	\mathbb{P}\left(
	\mathbb{X}^*_{n} < x \right)  =
	\frac{1}{\sqrt{2\pi}} \int_{-\infty}^{x}\,e^{-\frac{1}{2}t^2} \mathrm{d}t, \,
	\end{equation*}
	where $\mu$ and $\sigma^2$ are given by
	\begin{equation*}
	\mu= -\frac{\gamma'(0)}{\gamma(0)}
	\quad \text{\it and} \quad \sigma^2=
	\left(\frac{\gamma'(0)}{\gamma(0)}
	\right)^2-\frac{\gamma''(0)}{\gamma(0)}.
	\end{equation*}
\end{theorem}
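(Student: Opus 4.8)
The plan is to transfer the hypothesis on coefficient asymptotics into a ``quasi-powers'' estimate for the probability generating function of $\mathbb{X}_n$, and then to read asymptotic normality off the Taylor expansion of $\gamma$ at $0$, upgrading pointwise convergence of characteristic functions to a Berry--Esseen rate.

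\emph{Step 1 (quasi-powers form).} Since $f(n)=[x^n]f(x,1)$ and $\sum_t f(n,t)z^t=[x^n]f(x,z)$, for every $s$ one has
\[
\mathbb{E}\!\bigl(e^{s\mathbb{X}_n}\bigr)=\frac{\sum_t f(n,t)e^{st}}{\sum_t f(n,t)}=\frac{[x^n]f(x,e^s)}{[x^n]f(x,1)}.
\]
Inserting the hypothesis at $s$ and at $0$, the algebraic factor $n^{\alpha}$ cancels and, uniformly for $s$ in a (complex) neighbourhood $U$ of $0$,
\[
\mathbb{E}\!\bigl(e^{s\mathbb{X}_n}\bigr)=A(s)\,B(s)^{\,n}\bigl(1+O(1/n)\bigr),\qquad A(s):=\frac{c(s)}{c(0)},\quad B(s):=\frac{\gamma(0)}{\gamma(s)},
\]
with $A(0)=B(0)=1$, $A$ continuous near $0$ and $B$ analytic near $0$. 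Since $[x^n]f(x,e^s)>0$ for real $s$, $\gamma$ is positive on a real neighbourhood of $0$, so $L(s):=\log B(s)$ has real Taylor coefficients at $0$.

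\emph{Step 2 (convergence in distribution).} Differentiating, $L'(0)=-\gamma'(0)/\gamma(0)=\mu$ and $L''(0)=(\gamma'(0)/\gamma(0))^2-\gamma''(0)/\gamma(0)=\sigma^2$, exactly the constants in the statement; thus $L(s)=\mu s+\tfrac{\sigma^2}{2}s^2+O(s^3)$ (assume $\sigma^2>0$, the nondegenerate case). With $\mathbb{X}^*_n=(\mathbb{X}_n-\mu n)/\sqrt{n\sigma^2}$ and $s=\mathrm{i}t/(\sigma\sqrt n)$,
\[
\mathbb{E}\!\bigl(e^{\mathrm{i}t\mathbb{X}^*_n}\bigr)=e^{-\mathrm{i}t\mu\sqrt n/\sigma}\,A\!\Bigl(\tfrac{\mathrm{i}t}{\sigma\sqrt n}\Bigr)\exp\!\Bigl(n\,L\bigl(\tfrac{\mathrm{i}t}{\sigma\sqrt n}\bigr)\Bigr)\bigl(1+O(1/n)\bigr),
\]
and $n\,L(\mathrm{i}t/(\sigma\sqrt n))=\mathrm{i}\mu t\sqrt n/\sigma-t^2/2+O(|t|^3/\sqrt n)$ while $A(\mathrm{i}t/(\sigma\sqrt n))\to 1$ by continuity; hence $\mathbb{E}(e^{\mathrm{i}t\mathbb{X}^*_n})\to e^{-t^2/2}$ for each fixed $t$, and the Lévy continuity theorem gives $\mathbb{X}^*_n\Rightarrow\mathcal{N}(0,1)$.

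\emph{Step 3 (speed of convergence).} Fix $\delta>0$ so small that $\{|s|\le\delta\}\subset U$ and $\operatorname{Re}L(\mathrm{i}u)\le-\tfrac{\sigma^2}{4}u^2$ for $|u|\le\delta$ (possible since $\operatorname{Re}L(\mathrm{i}u)=-\tfrac{\sigma^2}{2}u^2+O(u^4)$), which yields $|B(\mathrm{i}u)|^{\,n}\le e^{-\sigma^2 n u^2/4}$. Let $G$ be the standard normal distribution function and apply the Esseen smoothing inequality with $T=\delta\sigma\sqrt n$,
\[
\sup_x\bigl|\mathbb{P}(\mathbb{X}^*_n<x)-G(x)\bigr|\le\frac1\pi\int_{-T}^{T}\Bigl|\frac{\phi_n(t)-e^{-t^2/2}}{t}\Bigr|\,\mathrm{d}t+\frac{c_0}{T},
\]
with $\phi_n$ the characteristic function of $\mathbb{X}^*_n$; here $c_0/T=O(n^{-1/2})$. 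Split the integral at $|t|=n^{1/6}$. On $|t|\le n^{1/6}$ the expansions of Step 2 (using $c$ Lipschitz near $0$, so $A(\mathrm{i}t/(\sigma\sqrt n))=1+O(|t|/\sqrt n)$) give $|\phi_n(t)-e^{-t^2/2}|\le C\,(|t|+|t|^3)\,n^{-1/2}\,e^{-t^2/2}$, which integrates against $\mathrm{d}t/|t|$ to $O(n^{-1/2})$. On $n^{1/6}<|t|\le T$ the bound $|B(\mathrm{i}t/(\sigma\sqrt n))|^{\,n}\le e^{-t^2/4}$ together with boundedness of $A$ on $\{|s|\le\delta\}$ gives $|\phi_n(t)-e^{-t^2/2}|\le C\,e^{-t^2/4}$, with integral $O(e^{-cn^{1/3}})=o(n^{-1/2})$. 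Summing the two contributions gives the claimed rate.

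The crux is Step 3. One must (i) use the hypothesis in its full \emph{uniform} strength on a fixed complex neighbourhood of $0$, in particular on the imaginary segment corresponding to $|t|\le\delta\sigma\sqrt n$, not merely for real $s$; (ii) exploit that the error is a genuine $O(1/n)$ and that $c$ is regular enough at $0$ (Lipschitz suffices; in all applications here $c(y)$ is analytic by Theorem~\ref{T:AsymAlgPra}, so this is automatic); and (iii) control $\phi_n$ over the whole range $|t|\le\delta\sigma\sqrt n$, which forces the intermediate split (e.g.\ at $n^{1/6}$) because the Taylor remainder $O(|t|^3/\sqrt n)$ is no longer small near the upper end. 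Steps 1--2 are otherwise routine; indeed the whole statement is the prototypical instance of the quasi-powers framework (cf.\ \citet{Flajolet:07a}).
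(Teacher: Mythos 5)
The paper does not prove this theorem; it is quoted verbatim from \citet{Bender:73} as a black box, so there is no internal proof to compare against. Your argument is the standard ``quasi-powers'' proof (cf.\ \citet{Flajolet:07a}, Theorems IX.8--IX.9), which is a legitimate and indeed the modern route to Bender's result: Steps 1--2 are correct and complete, and Step 3 is the right mechanism for the $O(n^{-1/2})$ rate. Three caveats, all of which are really under-hypothesizations of the theorem as stated rather than errors of yours, and all of which you at least partially flag: (i) the conclusion requires the variability condition $\sigma^2>0$ (otherwise $\mathbb{X}^*_n$ is undefined), so your parenthetical ``assume $\sigma^2>0$'' must be promoted to a standing hypothesis, as it is in Bender's original paper; (ii) ``uniformly in $s$ in a neighborhood of $0$'' must be read as a \emph{complex} neighborhood, since the Esseen integral needs the quasi-powers estimate along $s=\mathrm{i}u$ for $|u|\le\delta$ --- harmless here because Theorem~\ref{T:AsymAlgPra} supplies exactly that in every application in this paper, but a genuine strengthening of the literal real-variable reading; (iii) mere continuity of $c$ at $0$ yields convergence in distribution but not the $O(n^{-1/2})$ rate, since $A(\mathrm{i}t/(\sigma\sqrt n))-1$ could then tend to $0$ arbitrarily slowly --- Lipschitz regularity (or analyticity, which holds in the applications) is needed, as you note. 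Finally, two routine repairs in Step 3: the bound on $|t|\le n^{1/6}$ should invoke $|e^z-1|\le|z|e^{|z|}$ explicitly, because the Taylor remainder $O(|t|^3/\sqrt n)$ is only $O(1)$, not $o(1)$, at the top of that range; and the $O(1/n)$ multiplicative error term needs the observation $\phi_n(0)=1$ to remain integrable against $\mathrm{d}t/|t|$ near $t=0$. Neither affects the validity of the approach.
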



\section{RNA secondary structures}\label{S:second}

We consider secondary structures subject to \emph{minimum arc-length} restrictions,
arising from the rigidity of the backbone. RNA secondary structures having minimum arc-length two
were studied by Waterman \citep{Waterman:78s}. Arguably, the most realistic cases is $\lambda=4$~\citep{Stein:79}, and
RNA folding algorithms, generating minimum free energy structures, implicitly satisfy this
constraint\footnote{each hairpin loop contains at least three unpaired bases} for energetic reasons.

A \emph{stack} of length $r$ is a maximal sequence of "parallel" arcs, $((i,j),(i+1,j-1),\ldots,(i+(r-1),j-(r-1)))$.
Stacks of length $1$
are energetically unstable and we find typically stacks of length at least two or three in biological structures~\citep{Waterman:78s}.
A secondary structure, $S$, is \emph{$r$-canonical} if its stack-length satisfies $\geq r$. 
An RNA sequence is \emph{compatible with a structure $S$}, if and only if for any $S$-arc $(i,j)$, the pair $(x_i,x_j)$ forms a
base pair \citep{Reidys:97}.

A \emph{rainbow} is an arc connecting the first and last vertices in a structure. A secondary structure is called
\emph{reducible} if it does not contain a rainbow. Let $ s_{\lambda}^{[r]}(n)$ and $ t_{\lambda}^{[r]}(n)$ denote the
numbers of $r$-canonical secondary structures
and reducible secondary structures  of $n$ nucleotides with minimum arc-length $\lambda$, respectively. Let furthermore
$s_{\lambda}^{[r]}(n,l)$  and $ t_{\lambda}^{[r]}(n,l)$ denote the numbers of $r$-canonical secondary structures and reducible
secondary structures, filtered by the number of arcs. Let $\mathbf{S}_{\lambda}^{[r]}(x,y)=\sum_{n,l} s_{\lambda}^{[r]}(n,l)x^n y^l$ and
$\mathbf{T}_{\lambda}^{[r]}(x,y)=\sum_{n,l} t_{\lambda}^{[r]}(n,l)x^n y^l$ denote the corresponding bivariate generating functions.

First we compute the generating function $\mathbf{S}_{\lambda}^{[r]}(x,y)$.
\begin{theorem}\label{T:arcgf}
	For any $\lambda, r\in \mathbb{N}$, the generating function $\mathbf{S}_{\lambda}^{[r]}(x,y)$ satisfies the functional equation
	\begin{equation}\label{Eq:arcfe}
	(x^2 y)^r  \mathbf{S}_{\lambda}^{[r]}(x,y)^2 -\mathbf{B}_{\lambda}^{[r]}(x,y)\, \mathbf{S}_{\lambda}^{[r]}(x,y)+\mathbf{A}^{[r]}(x,y)=0,
	\end{equation}
	where 
	\begin{align*}
	\mathbf{A}^{[r]}(x,y)&=1-x^2 y+(x^2 y)^r,\\
	\mathbf{B}_{\lambda}^{[r]}(x,y)&=(1-x) \mathbf{A}^{[r]}(x,y) + 	(x^2 y)^r\sum_{i=0}^{\lambda-2} x^i.
	\end{align*}
Explicitly, we have
\begin{equation}\label{Eq:arcunex}
	\begin{aligned}
	\mathbf{S}_{\lambda}^{[r]}(x,y)&= \frac{\mathbf{B}_{\lambda}^{[r]}(x,y)-\sqrt{\mathbf{B}_{\lambda}^{[r]}(x,y)^2-4 (x^2 y)^r \mathbf{A}^{[r]}(x,y)}}{2 (x^2 y)^r},\\
\mathbf{S}_{\lambda}^{[r]}(x,y)&= \frac{\mathbf{A}^{[r]}(x,y)}{\mathbf{B}_{\lambda}^{[r]}(x,y)}\, {\bf
	C}\!\left(\frac{(x^2 y)^r \mathbf{A}^{[r]}(x,y)}{\mathbf{B}_{\lambda}^{[r]}(x,y)^2}
\right),
	\end{aligned}
\end{equation}
	where	${\bf C}(x)$ denotes the generating function for the Catalan numbers and is given by $ \frac{1-\sqrt{1-4x}}{2x}$.
In particular, for $r=1$,	
	\begin{equation}\label{Eq:Nara}
	s_{\lambda}^{[1]}(n,l) =\sum_{k=1}^l N(l,k) \binom{n-(\lambda-1)k}{2l},
	\end{equation}
	where $N(l,k)= \frac{1}{l} \binom{l}{k} \binom{l}{k-1}$ is the Narayana number, counting the number of
	plane trees of $l$ edges with $k$ leaves.
\end{theorem}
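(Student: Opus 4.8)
The plan is to derive the quadratic (\ref{Eq:arcfe}) from a symbolic decomposition, solve it, and then handle the case $r=1$ combinatorially. I introduce the auxiliary class of $r$-canonical structures that carry a rainbow, with bivariate generating function $\mathbf{R}_\lambda^{[r]}(x,y)$; since a structure either does or does not carry a rainbow, $\mathbf{R}_\lambda^{[r]}=\mathbf{S}_\lambda^{[r]}-\mathbf{T}_\lambda^{[r]}$.

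Two functional equations will pin down $\mathbf{S}_\lambda^{[r]}$. The first comes from peeling off vertex $1$: it is either unpaired, leaving an arbitrary structure on the remaining $n-1$ vertices, or paired to some $j$, in which case $[1,j]$ carries a rainbow-structure and $[j+1,n]$ an arbitrary structure; this yields $\mathbf{S}_\lambda^{[r]}=1+x\,\mathbf{S}_\lambda^{[r]}+\mathbf{R}_\lambda^{[r]}\mathbf{S}_\lambda^{[r]}$, equivalently $\mathbf{R}_\lambda^{[r]}=1-x-1/\mathbf{S}_\lambda^{[r]}$. The second decomposes a rainbow-structure along its outermost stack: the rainbow sits atop a \emph{maximal} stack of some length $p\ge r$, contributing the factor $(x^2y)^p$, and the enclosed interval must carry a reducible structure (otherwise the stack would not be maximal) of at least $\lambda-1$ vertices (the innermost arc of the stack has length $\ge\lambda$). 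A reducible structure on fewer than $\lambda-1$ vertices has no arc, hence is a run of unpaired vertices with generating function $\sum_{i=0}^{\lambda-2}x^i$, so summing the geometric series over $p$ gives $\mathbf{R}_\lambda^{[r]}=\frac{(x^2y)^r}{1-x^2y}\bigl(\mathbf{T}_\lambda^{[r]}-\sum_{i=0}^{\lambda-2}x^i\bigr)$. Using $\mathbf{T}_\lambda^{[r]}=\mathbf{S}_\lambda^{[r]}-\mathbf{R}_\lambda^{[r]}$ to eliminate $\mathbf{T}_\lambda^{[r]}$, then multiplying through by $\mathbf{S}_\lambda^{[r]}(1-x^2y)$ and collecting terms — where the identity $(x^2y)^r(x-1)-(1-x^2y)(1-x)=-(1-x)\mathbf{A}^{[r]}$ is exactly what makes the coefficient of $\mathbf{S}_\lambda^{[r]}$ collapse to $-\mathbf{B}_\lambda^{[r]}$ and the constant term to $\mathbf{A}^{[r]}$ — produces (\ref{Eq:arcfe}). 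Solving the quadratic and selecting the branch with $\mathbf{S}_\lambda^{[r]}(0,0)=1$ forces the minus sign before the radical, which is the first line of (\ref{Eq:arcunex}); factoring out $\mathbf{B}_\lambda^{[r]}/\bigl(2(x^2y)^r\bigr)$ and using $1-\sqrt{1-4u}=2u\,\mathbf{C}(u)$ with $u=(x^2y)^r\mathbf{A}^{[r]}/(\mathbf{B}_\lambda^{[r]})^2$ gives the Catalan form.

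For $r=1$ one has $\mathbf{A}^{[1]}=1$ and the sole constraint is arc-length $\ge\lambda$, and I would argue combinatorially. A secondary structure with $l$ arcs and $k$ minimal (innermost) arcs determines an underlying noncrossing perfect matching of its $2l$ paired vertices having exactly $k$ minimal arcs, and the number of such matchings is the Narayana number $N(l,k)$ — minimal arcs correspond to the leaves of the associated plane tree with $l$ edges (equivalently, the peaks of a Dyck path). Deleting $\lambda-1$ unpaired vertices from the hairpin loop of each minimal arc (possible, since each such loop contains at least $\lambda-1$ of them) is a bijection onto structures with arc-length $\ge 1$ on $n-(\lambda-1)k$ vertices with the same underlying matching; in the latter the $2l$ paired positions may be chosen freely among all $n-(\lambda-1)k$ positions, after which the matching shape determines the structure uniquely. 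Hence there are $\binom{n-(\lambda-1)k}{2l}$ structures per shape, and summing over $k$ gives (\ref{Eq:Nara}); alternatively (\ref{Eq:Nara}) can be extracted from the explicit generating function by Lagrange inversion.

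I expect the delicate points to be the bookkeeping in the rainbow-structure decomposition — establishing that the enclosed interval must be exactly a reducible structure on at least $\lambda-1$ vertices, so that the defect term is precisely $\sum_{i=0}^{\lambda-2}x^i$ — and, in the $r=1$ step, verifying that the deletion map is a genuine bijection: that it neither creates nor destroys minimal arcs, and that every choice of $2l$ positions is realized.
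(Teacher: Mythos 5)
Your proof is correct and takes essentially the same route as the paper: the same two functional equations (first-vertex decomposition giving $\mathbf{S}=1+x\mathbf{S}+\mathbf{R}\mathbf{S}$, and the maximal-outer-stack decomposition of rainbow structures with defect term $\sum_{i=0}^{\lambda-2}x^i$), the same elimination yielding the quadratic, and the same matching/plane-tree/Narayana argument for $r=1$. Your deletion map is just the inverse of the paper's insertion count, so the two arguments coincide.
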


\begin{proof} 
	First we derive a functional equation satisfied by  $\mathbf{S}_{\lambda}^{[r]}(x,y)$ and $\mathbf{T}_{\lambda}^{[r]}(x,y)$ via a decomposition for an $r$-canonical secondary structure with minimum arc-length $\lambda$. Any given structure must belong to one of the following three classes: 
	\begin{enumerate}
		\item the empty structure it corresponds to the coefficient $1$;
		\item the collection of structures, starting with an unpaired vertex, are counted by
		$x \mathbf{S}_{\lambda}^{[r]}(x,y)$;
		\item structures starting with a paired vertex induce a maximum stack containing this base pair, and two segments, one being the nested
		substructure and the other the concatenated substructure.
	        An arc corresponds to $x^2 y$ and a stack of size at least $r$ corresponds to $\frac{(x^2 y)^r}{1-x^2 y}$.
                The nested segment must by construction be reducible and contains at
		least $\lambda-1$ vertices, as a result of the arc-length restriction.
		Since any segment with at most $\lambda-2$ vertices corresponds to the term
		$\sum_{i=0}^{\lambda -2}x^i$, the nested segment gives rise to
		$\mathbf{T}_{\lambda}^{[r]}(x,y)-\sum_{i=0}^{\lambda -2}x^i$.
		The concatenated segment gives rise to the term $\mathbf{S}_{\lambda}^{[r]}(x,y)$. Thus
		we arrive at
		\[\frac{(x^2 y)^r}{1-x^2 y} \, \mathbf{S}_{\lambda}^{[r]}(x,y) \Big(\mathbf{T}_{\lambda}^{[r]}(x,y)-\sum_{i=0}^{\lambda -2}x^i\Big).\]
	\end{enumerate}
	The decomposition is illustrated in Fig.~\ref{F:dec}.
	Accordingly we obtain the functional equation:
	\begin{equation}\label{Eq:1}
	\mathbf{S}_{\lambda}^{[r]}(x,y)=1+x \mathbf{S}_{\lambda}^{[r]}(x,y)+\frac{(x^2 y)^r}{1-x^2 y} \, \mathbf{S}_{\lambda}^{[r]}(x,y) \Big(\mathbf{T}_{\lambda}^{[r]}(x,y)-\sum_{i=0}^{\lambda -2}x^i\Big).
	\end{equation}
        Now we decompose an irreducible secondary structure deriving a second functional equation, relating $\mathbf{S}_{\lambda}^{[r]}(x,y)$ and
        $\mathbf{T}_{\lambda}^{[r]}(x,y)$. Given an irreducible structure, it can decomposed as a maximum stack containing its rainbow and the
        nested substructure, see Fig.~\ref{F:dec}. The nested segment has to be reducible and has length at least $\lambda-1$,
        i.e.~$\mathbf{T}_{\lambda}^{[r]}(x,y)-\sum_{i=0}^{\lambda -2}x^i$. The irreducible structures have the generating function
        $\mathbf{S}_{\lambda}^{[r]}(x,y)-\mathbf{T}_{\lambda}^{[r]}(x,y)$, whence
\begin{equation}\label{Eq:2}
\mathbf{S}_{\lambda}^{[r]}(x,y)-\mathbf{T}_{\lambda}^{[r]}(x,y)= \frac{(x^2 y)^r}{1-x^2 y}\Big(\mathbf{T}_{\lambda}^{[r]}(x,y)-\sum_{i=0}^{\lambda -2}x^i\Big).
\end{equation}
Solving eq.~(\ref{Eq:2}) for $\mathbf{T}_{\lambda}^{[r]}(x,y)$ and substituting the solution into eq.~(\ref{Eq:1}), we obtain eq.~(\ref{Eq:arcfe})
	which immediately implies eqs.~(\ref{Eq:arcunex}).
	\begin{figure}
		\centering
		\includegraphics[width=0.8\textwidth]{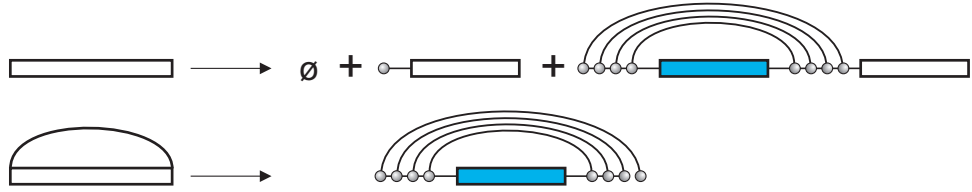}
		\caption
		{\small The decomposition of secondary structure and irreducible structure (reducible structures are colored in blue).}
		\label{F:dec}
	\end{figure}
	It remains to prove eq.~(\ref{Eq:Nara}).
	Given a secondary structure $S$ having $n$ vertices and $l$ arcs, removing any unpaired vertices induces the
	diagram $S'$, i.e.~a matching. Suppose $S'$ has $k$ $1$-arcs. It is well-known that the set of matchings with
	$l$ arcs and $k$ $1$-arcs corresponds to the set of plane trees of $l$ edges with $k$ leaves, which
	is counted by the Narayana number $N(l,k)= \frac{1}{l} \binom{l}{k} \binom{l}{k-1}$ (see e.g. \citet{stanley:2001}).
        In view of minimum arc-length constraints, in order to recover $S$ from $S'$, we need to guarantee that each $1$-arc
	contains at least $\lambda-1$ unpaired vertices. Accordingly, the number of all possible insertions is given
	by $ \binom{n-(\lambda-1)k}{2l}$, i.e~eq.~(\ref{Eq:Nara}).
\end{proof}

{\bf Remark:} Eq.~(\ref{Eq:Nara}) generalizes the closed formula $
s_{2}^{[1]}(n,l) =\frac{1}{l} \binom{n-l}{l+1} \binom{n-l-1}{l-1}
$ proved in~\citet{Waterman:94a} by establishing a bijection between secondary structures and linear trees.





Interpreting the indeterminant $y$ as a parameter, we consider $\mathbf{S}_{\lambda}^{[r]}(x,y)$ as a univariate
powerseries. One key question then is to compute the asymptotic behaviour of the coefficients.

\begin{theorem}\label{T:asyexp}
	For $1\leq \lambda \leq 4$ and $1\leq r \leq 3$, the coefficients of $\mathbf{S}_{\lambda}^{[r]}(x,y)$ are asymptotically given by
	\begin{equation} \label{Eq:arcsing}
	[x^{n} ] \mathbf{S}_{\lambda}^{[r]}(x,y) =c_{\lambda}^{[r]}(y) n^{-\frac{3}{2} } \big(\rho_{\lambda}^{[r]}(y)
	\big)^{-n}\ \big(1+O(n^{-1})\big),
	\end{equation}
	as $n \rightarrow \infty$, uniformly, for $y$ restricted to a small neighborhood of $1$, where
	$c_{\lambda}^{[r]}(y)$ is continuous and nonzero near $1$, and $\rho_{\lambda}^{[r]}(y)$ is the minimal positive,
	real solution of $\mathbf{B}_{\lambda}^{[r]}(x,y)^2 -4 (x^2 y)^r \mathbf{A}^{[r]}(x,y)=0$, for $y$ in a neighborhood of $1$.
	
	In particular, we have
	\[
	\rho_1^{[1]}(y)=\frac{1-2\sqrt{y}}{1-4y},\quad \rho_2^{[1]}(y)=\frac{1+2 \sqrt{y}-\sqrt{1+4 \sqrt{y}}}{2 y}.
	\]
\end{theorem}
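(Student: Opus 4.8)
The plan is to recast the functional equation~(\ref{Eq:arcfe}) as a polynomial equation and apply Theorem~\ref{T:AsymAlgPra}, with $y$ playing the role of the auxiliary parameter. Put
\[
\Phi(x,y,z)=(x^2 y)^r z^2-\mathbf{B}_{\lambda}^{[r]}(x,y)\,z+\mathbf{A}^{[r]}(x,y),
\]
a polynomial of degree $2$ in $z$; by the first identity in~(\ref{Eq:arcunex}), $\mathbf{S}_{\lambda}^{[r]}(x,y)$ is the branch of $\Phi(x,y,z)=0$ analytic at $x=0$ with value $1$ there, i.e.\ the ``$-$''-root. Computing the resultant one finds $\mathbf{R}(\Phi,\Phi_z,z)=-(x^2y)^r\big(\mathbf{B}_{\lambda}^{[r]}(x,y)^2-4(x^2y)^r\mathbf{A}^{[r]}(x,y)\big)$, so away from $x=0$ the resultant $\Delta(x,y)$ and the discriminant $\mathbf{B}_{\lambda}^{[r]}(x,y)^2-4(x^2y)^r\mathbf{A}^{[r]}(x,y)$ vanish at the same points.

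First I would check the hypotheses of Theorem~\ref{T:AsymG} for $\Phi(x,1,z)$. Aperiodicity holds since the empty and the all-unpaired structures give $s_{\lambda}^{[r]}(n)\ge1$ for every $n\ge0$, so the indices $0<1<2$ work. Taking $\rho=\rho_{\lambda}^{[r]}(1)$ the minimal positive real zero of the discriminant and $\pi=\mathbf{B}_{\lambda}^{[r]}(\rho,1)/(2\rho^{2r})$, the equations $\Phi(\rho,1,\pi)=\Phi_z(\rho,1,\pi)=0$ hold by construction, and $\Phi_{zz}=2(x^2y)^r\ne0$ at $x=\rho$. The genuinely computational point is $\Phi_x(\rho,1,\pi)\ne0$, which together with the simplicity of $\rho$ as a zero of the discriminant I would verify by a direct calculation in each of the finitely many cases $1\le\lambda\le4$, $1\le r\le3$. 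That $\rho$ is the unique dominant singularity of $\mathbf{S}_{\lambda}^{[r]}(x,1)$ then follows from Pringsheim's theorem and the radical form~(\ref{Eq:arcunex}): all singularities on the positive axis are zeros of the discriminant, so the radius of convergence is exactly $\rho$.

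With these facts, Theorem~\ref{T:AsymAlgPra} applies. The implicit function theorem at the simple zero $(\rho,1)$ of $\Delta$ produces the unique branch $\rho_{\lambda}^{[r]}(y)$, analytic at $1$ with $\rho_{\lambda}^{[r]}(1)=\rho$; continuity of the zeros of $\Delta(\cdot,y)$ keeps it dominant for $y$ near $1$. The theorem gives the square-root singular expansion and, via the transfer theorems~\ref{T:transfer1}--\ref{T:transfer2},
\[
[x^{n}]\mathbf{S}_{\lambda}^{[r]}(x,y)=c_{\lambda}^{[r]}(y)\,n^{-3/2}\big(\rho_{\lambda}^{[r]}(y)\big)^{-n}\big(1+O(n^{-1})\big)
\]
uniformly for $y$ in a small neighborhood of $1$, with $c_{\lambda}^{[r]}(y)$ continuous and nonzero there. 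Finally, for $r=1$ we have $\mathbf{A}^{[1]}=1$: if $\lambda=1$ then $\mathbf{B}_1^{[1]}=1-x$ and the discriminant is $(1-4y)x^2-2x+1$, whose smaller root is $\rho_1^{[1]}(y)=(1-2\sqrt y)/(1-4y)$; if $\lambda=2$ then $\mathbf{B}_2^{[1]}=1-x+x^2y$ and extracting the appropriate square root of the discriminant reduces to $yx^2-(1+2\sqrt y)x+1=0$, whose smaller root is $\rho_2^{[1]}(y)=(1+2\sqrt y-\sqrt{1+4\sqrt y})/(2y)$.

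\textbf{Main obstacle.} All the conceptual content is delivered by Theorem~\ref{T:AsymAlgPra}; the work lies in the finite case analysis that feeds it. For each of the twelve pairs $(\lambda,r)$ one must confirm that the minimal positive real zero of the discriminant is simple, that $\Phi_x\ne0$ there, and --- the most delicate point --- that this zero stays \emph{strictly} dominant over all other, possibly complex, zeros of $\Delta(\cdot,y)$ not only at $y=1$ but uniformly on a neighborhood of $y=1$, so that no competing branch point overtakes it; this uniformity follows from the strict inequality at $y=1$ by continuity, but it is where one has to be careful.
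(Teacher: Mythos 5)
Your proposal is correct and follows essentially the same route as the paper: verify the hypotheses of Theorem~\ref{T:AsymG} for $\Phi(x,1,z)$ (with the finitely many cases $1\le\lambda\le4$, $1\le r\le 3$ checked directly), then invoke Theorem~\ref{T:AsymAlgPra} to obtain the square-root singular expansion in $x$ uniformly for $y$ near $1$ and transfer to the coefficient asymptotics. The only addition beyond the paper's argument is your explicit factorization of the discriminant yielding $\rho_1^{[1]}(y)$ and $\rho_2^{[1]}(y)$, which the paper states without derivation; both computations check out.
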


\begin{proof}
	We shall employ Theorem~\ref{T:AsymAlgPra}.
	
	{\bf Step $1$:} we consider first the monovariate power series
	$\mathbf{S}_{\lambda}^{[r]}(x,1)$: 
	
	In view of eq.~(\ref{Eq:arcfe}) of Theorem~\ref{T:arcgf} and setting $y=1$, we see that
	$\mathbf{S}_{\lambda}^{[r]}(x,1)$ satisfies the equation 
	\[
	\Phi(x,\mathbf{S}_{\lambda}^{[r]}(x,1))=
		x^{2r}  \mathbf{S}_{\lambda}^{[r]}(x,y)^2 -\mathbf{B}_{\lambda}^{[r]}(x,1)\, \mathbf{S}_{\lambda}^{[r]}(x,1)+\mathbf{A}^{[r]}(x,1)
=0,
	\]
	where $\Phi(x,z)=x^{2r} z^2- \mathbf{B}_{\lambda}^{[r]}(x,1) z+\mathbf{A}^{[r]}(x,1)$. Next we observe that  
	\[
	\Delta(x)= \mathbf{R} \left(\Phi(x,z), \frac{\partial}{\partial z}\Phi(x,z),z
	\right)= -x^{2r} ( \mathbf{B}_{\lambda}^{[r]}(x,1)^2 -4 x^{2r}\mathbf{A}^{[r]}(x,1) ),
	\]
	where $\mathbf{R}(f,g,z)$ is the resultant of two polynomials $f(z)=a_n\prod_{i=1}^n (z-\alpha_i)$
	and $g(z)=b_m\prod_{j=1}^m (z-\beta_j)$, $\mathbf{R}(f,g,z)= a_n^m b_m^n\prod_{i=1}^n \prod_{j=1}^m
	(\alpha_i-\beta_j)$.
	
	Ad {\bf (1)}, we inspect for $1\leq \lambda \leq 4$ and $1\leq r \leq 3$, that there exist
	positive numbers $\rho_{\lambda}^{[r]}$ and $\pi_{\lambda}^{[r]}=\mathbf{S}_{\lambda}^{[r]}(\rho_{\lambda}^{[r]},1)$,
	such that $\rho_{\lambda}^{[r]}$ is a root of the resultant $\Delta(x)$ and $(\rho_{\lambda}^{[r]},\pi_{\lambda}^{[r]})$
	and furthermore $\Phi(\rho_{\lambda}^{[r]},\pi_{\lambda}^{[r]})=0$ as well as $\Phi_z(\rho_{\lambda}^{[r]},\pi_{\lambda}^{[r]})=0$.
	In fact, $\rho_{\lambda}^{[r]}$ is the minimal positive real solution of the $\Delta(x)$-divisor
	$ \mathbf{B}_{\lambda}^{[r]}(x,1)^2 -4 x^{2r}\mathbf{A}^{[r]}(x,1) $.
	
	As for {\bf (2)}, we verify $\Phi_x(\rho_{\lambda}^{[r]},\pi_{\lambda}^{[r]})\neq 0$ and
	$\Phi_{z z}(\rho_{\lambda}^{[r]},\pi_{\lambda}^{[r]})\neq 0$.
	
	Ad {\bf (3)}, for any $\lambda$ there exists some $i$, such that $s_{\lambda}^{[r]}(i),
	s_{\lambda}^{[r]}(i+1),s_{\lambda}^{[r]}(i+2)\neq 0$, i.e.~$\mathbf{S}_{\lambda}^{[r]}(x,1)$ is aperiodic.
	
	Theorem~\ref{T:AsymG} implies that $\rho_{\lambda}^{[r]}$ is the dominant singularity and that the singular
	expansion of $\mathbf{S}_{\lambda}^{[r]}(x,1)$ is given by 
	\[
	\mathbf{S}_{\lambda}^{[r]}(x,1)= \pi_{\lambda}^{[r]}+ \delta_{\lambda}^{[r]} \big( \rho_{\lambda}^{[r]}-x \big)^{\frac{1}{2}} (1+o(1)),
	\]
	where $\delta_{\lambda}^{[r]}$ is a positive real number. 
	
	{\bf Step $2$:} we are now in position to employ Theorem~\ref{T:AsymAlgPra}.
	By eq.~(\ref{Eq:arcfe}), $\mathbf{S}_{\lambda}^{[r]}(x,y)$ satisfies 
	\begin{equation*}
(x^2 y)^r  \mathbf{S}_{\lambda}^{[r]}(x,y)^2 -\mathbf{B}_{\lambda}^{[r]}(x,y)\, \mathbf{S}_{\lambda}^{[r]}(x,y)+\mathbf{A}^{[r]}(x,y)=0
	\end{equation*}
	and $\Phi(x,y,z)=(x^2 y)^r z^2 -\mathbf{B}_{\lambda}^{[r]}(x,y) z+\mathbf{A}^{[r]}(x,y)$ is a polynomial in $z$ of degree two.
	$
	\Delta(x,y)= \mathbf{R} \left(\Phi(x,y,z), \frac{\partial}{\partial z}\Phi(x,y,z),z\right)
	$
	satisfies $\Delta(x,1)=\Delta(x)$ and $\rho_{\lambda}^{[r]}$ is a root of $\Delta(x,1)$.
	Step $1$ implies that $z(x)=\mathbf{S}(x,1)$ has the unique dominant singularity $\rho_\lambda$
	and $z(\rho)=\pi_\lambda$.
	We compute
	\[
	\Delta(x,y)= \mathbf{R} \left(\Phi(x,y,z), \frac{\partial}{\partial z}\Phi(x,y,z),z
	\right)= -(x^2 y)^r  ( \mathbf{B}_{\lambda}^{[r]}(x,y)^2 -4 (x^2 y)^r \mathbf{A}^{[r]}(x,y))
	\]
	and check that there exists for $1\leq \lambda \leq 4$ and $1\leq r \leq 3$ a unique $\rho_{\lambda}^{[r]}(y)$,
	such that $\Delta(\rho_{\lambda}^{[r]}(y),y)=0$ and $\rho_{\lambda}^{[r]}(1)=\rho_{\lambda}^{[r]}$.
	$\rho_{\lambda}^{[r]}(y)$ is the minimal, positive, real solution of $\mathbf{B}_{\lambda}^{[r]}(x,y)^2 -4 (x^2 y)^r \mathbf{A}^{[r]}(x,y)=0$
	for $y$ in a neighborhood of $1$.
	
	By Theorem~\ref{T:AsymAlgPra}, the singular expansion of $\mathbf{S}_{\lambda}^{[r]}(x,y)$ is given by
	\[
	\mathbf{S}_{\lambda}^{[r]}(x,y)= \pi_{\lambda}^{[r]}(y)+ \delta_{\lambda}^{[r]}(y) \big( \rho_{\lambda}^{[r]}(y)-
	x \big)^{\frac{1}{2}} (1+o(1)),
	\]
	where $\pi_{\lambda}^{[r]}(y)$ and $\delta_{\lambda}^{[r]}(y)$ are analytic at $1$ and
	$\delta_{\lambda}^{[r]}(1)$ is a positive, real number and furthermore
	$$
	[x^{n} ] \mathbf{S}_{\lambda}^{[r]}(x,y) =c_{\lambda}^{[r]}(y) n^{-\frac{3}{2} } \big(\rho_{\lambda}^{[r]}(y)
	\big)^{-n}\ \big(1+O(n^{-1})\big).
	$$ 
\end{proof}



We next analyze the random variable $\mathbb{Y}_{\lambda,n}^{[r]}$, counting the numbers of arcs in RNA secondary
structures. By construction we have
\[
\mathbb{P}(\mathbb{Y}_{\lambda,n}^{[r]}=l)=\frac{s_{\lambda}^{[r]}(n,l)}{s_{\lambda}^{[r]}(n)},
\]
where $l=0,1,\ldots, \lfloor\frac{n+1-\lambda}{2} \rfloor $.

Theorem~\ref{T:asyexp} and Theorem~\ref{T:AsymAlgPra} immediately imply

\begin{theorem}\label{T:arcclt}
	There exists a pair $(\mu_{\lambda}^{[r]},\sigma_{\lambda}^{[r]})$ such that the normalized random variable
	\begin{equation*}
	\mathbb{Y}^{[r],*}_{n,\lambda}=\frac{\mathbb{Y}_{\lambda,n}^{[r]}- \mu_{\lambda}^{[r]} \, n}{\sqrt{n\,
			(\sigma_{\lambda}^{[r]})^2 }}
	\end{equation*}
	converges in distribution to a Gaussian variable with a speed of convergence $O(n^{-\frac{1}{2}})$. That
	is, we have
	\begin{equation}\label{Eq:sclt}
	\lim_{n\to\infty}\mathbb{P}\left(\frac{\mathbb{Y}_{\lambda,n}^{[r]}- \mu_{\lambda}^{[r]}
		n}{\sqrt{n\, (\sigma_{\lambda}^{[r]})^2}} < x \right)  =
	\frac{1}{\sqrt{2\pi}}\int_{-\infty}^{x}\,e^{-\frac{1}{2}t^2} \mathrm{d}t \ ,
	\end{equation}
	where $\mu_{\lambda}^{[r]}$ and $\sigma_{\lambda}^{[r]}$ are given by
	\begin{equation}\label{Eq:cltpara}
	\mu_{\lambda}^{[r]}= -\frac{\theta'(0)}{\theta(0)},
	\qquad \qquad (\sigma_{\lambda}^{[r]})^2=
	\left(\frac{\theta'(0)}{\theta(0)}
	\right)^2-\frac{\theta''(0)}{\theta(0)},
	\end{equation}
	where $\theta(s) = \rho_{\lambda}^{[r]}(e^s)$.
\end{theorem}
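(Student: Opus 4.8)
The plan is to apply Bender's central limit theorem, Theorem~\ref{T:normal}, to the bivariate generating function $f(x,u)=\mathbf{S}_{\lambda}^{[r]}(x,u)=\sum_{n,l}s_{\lambda}^{[r]}(n,l)\,x^n u^l$, with the random variable $\mathbb{Y}_{\lambda,n}^{[r]}$ in the role of $\mathbb{X}_n$. All the hypotheses of Theorem~\ref{T:normal} except the uniform coefficient asymptotics are immediate: the coefficients $s_{\lambda}^{[r]}(n,l)$ are nonnegative and, for $n$ large enough, $s_{\lambda}^{[r]}(n)=\sum_l s_{\lambda}^{[r]}(n,l)>0$, so the conditional probabilities $\mathbb{P}(\mathbb{Y}_{\lambda,n}^{[r]}=l)=s_{\lambda}^{[r]}(n,l)/s_{\lambda}^{[r]}(n)$ are well defined. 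The substantive input is precisely the uniform estimate supplied by Theorem~\ref{T:asyexp}.

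Concretely, I would substitute $y=e^s$ into eq.~(\ref{Eq:arcsing}). Since $s\mapsto e^s$ is analytic with $e^0=1$ and carries a small neighbourhood of $0$ into a small neighbourhood of $1$, Theorem~\ref{T:asyexp} yields
\[
[x^n]\mathbf{S}_{\lambda}^{[r]}(x,e^s)=c_{\lambda}^{[r]}(e^s)\,n^{-3/2}\,\big(\rho_{\lambda}^{[r]}(e^s)\big)^{-n}\big(1+O(n^{-1})\big),
\]
uniformly for $s$ in a neighbourhood of $0$. This is exactly the form required in Theorem~\ref{T:normal}, with $\alpha=-\tfrac32$, $c(s)=c_{\lambda}^{[r]}(e^s)$ and $\gamma(s)=\rho_{\lambda}^{[r]}(e^s)=:\theta(s)$. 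I would then record the remaining regularity conditions: $c(s)$ is continuous and nonzero near $0$ because $c_{\lambda}^{[r]}(y)$ is continuous and nonzero near $1$ and is composed with the analytic map $e^s$; $\alpha=-\tfrac32$ is constant; and $\gamma(s)=\theta(s)$ is analytic near $0$ because $\rho_{\lambda}^{[r]}(y)$ is analytic at $1$ (both facts coming from Theorem~\ref{T:asyexp}). Theorem~\ref{T:normal} then produces a pair $(\mu_{\lambda}^{[r]},\sigma_{\lambda}^{[r]})$ with $\mu_{\lambda}^{[r]}=-\theta'(0)/\theta(0)$ and $(\sigma_{\lambda}^{[r]})^2=(\theta'(0)/\theta(0))^2-\theta''(0)/\theta(0)$, for which the normalized variable $\mathbb{Y}^{[r],*}_{n,\lambda}$ converges in distribution to a standard Gaussian at speed $O(n^{-1/2})$, i.e.~eq.~(\ref{Eq:sclt}) with $\theta(s)=\rho_{\lambda}^{[r]}(e^s)$.

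The only step that is not pure bookkeeping is the nondegeneracy $\sigma_{\lambda}^{[r]}>0$, without which the normalization in eq.~(\ref{Eq:sclt}) is meaningless. Rewriting $(\sigma_{\lambda}^{[r]})^2=-(\log\theta)''(0)$, this is the strict concavity of $s\mapsto\log\rho_{\lambda}^{[r]}(e^s)$ at $s=0$, equivalently the strict concavity of $\log\rho_{\lambda}^{[r]}$ in $\log y$ at $y=1$. For the finitely many pairs $1\le\lambda\le4$, $1\le r\le3$ under consideration, $\rho_{\lambda}^{[r]}(y)$ is an explicit algebraic function defined by $\mathbf{B}_{\lambda}^{[r]}(x,y)^2-4(x^2y)^r\mathbf{A}^{[r]}(x,y)=0$ — in the cases $(\lambda,r)=(1,1)$ and $(2,1)$ given in closed form in Theorem~\ref{T:asyexp} — so the inequality can be checked by direct implicit differentiation at $y=1$; I expect this elementary but case-by-case verification to be the main obstacle. (Alternatively one may argue non-concentration of $\mathbb{Y}_{\lambda,n}^{[r]}$ directly, but the explicit computation is cleanest here.) With $\sigma_{\lambda}^{[r]}>0$ in hand, the statement of Theorem~\ref{T:arcclt}, including the parameters in eq.~(\ref{Eq:cltpara}), follows at once.
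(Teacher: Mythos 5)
Your proposal is correct and follows essentially the same route as the paper, which likewise obtains Theorem~\ref{T:arcclt} by feeding the uniform asymptotics of Theorem~\ref{T:asyexp} for $f(x,e^s)=\mathbf{S}_{\lambda}^{[r]}(x,e^s)$ into Bender's Theorem~\ref{T:normal} with $\gamma(s)=\theta(s)=\rho_{\lambda}^{[r]}(e^s)$. Your additional check that $(\sigma_{\lambda}^{[r]})^2>0$ is a point the paper passes over silently (it is confirmed only implicitly by the numerical values in Table~\ref{Tab:clt}), so including that case-by-case verification is a welcome refinement rather than a divergence in method.
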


In Table~\ref{Tab:clt}, we list the values of $\mu_{\lambda}^{[r]}$ and $(\sigma_{\lambda}^{[r]})^2$ for $1\leq \lambda \leq 4$
and $1\leq r \leq 3$.

{\bf Remark.} The expectation and variance of the number of arcs in secondary structures has been studied
computationally in~\citet{Fontana:93} for mfe structures and combinatorially in~\citet{Hofacker:98}
for $1$-canonical structures with minimum arc-length $\lambda$. In \citet{Jin-Reidys} more general results
on local and global limit theorems for $k$-noncrossing structures are proved. These imply, setting $k=2$,
Theorem~\ref{T:arcclt}. The above approach, in light of the combinatorial proof of the recursion
in Theorem~\ref{T:arcgf}, provides an elementary derivation.

\begin{table}[htbp]
	\caption{The central limit theorem for the number of arcs.
		We list the values of $\mu_{\lambda}^{[r]}$ and $(\sigma_{\lambda}^{[r]})^2$ derived from eq.~(\ref{Eq:cltpara}).} \label{Tab:clt}
	\begin{center}
		\begin{tabular}{ccccccc}
			\hline
			&\multicolumn{2}{c}{$r=1$} &\multicolumn{2}{c}{$r=2$}
			&\multicolumn{2}{c}{$r=3$}  \\
			\hline
			& $\mu_{\lambda}^{[r]}$ & $(\sigma_{\lambda}^{[r]})^2$
			& $\mu_{\lambda}^{[r]}$ & $(\sigma_{\lambda}^{[r]})^2$
			& $\mu_{\lambda}^{[r]}$ & $(\sigma_{\lambda}^{[r]})^2$\\
			\hline
			$\lambda=1$ & $0.3333$ & $0.0556$ & $0.3484$ & $0.0719$ & $0.3582$ & $0.0852$\\
			$\lambda=2$ & $0.2764$ & $0.0447$ & $0.3172$ & $0.0643$ & $0.3364$ & $0.0791$ \\
	        $\lambda=3$ & $0.2500$ & $0.0442$ & $0.2983$ & $0.0631$ & $0.3215$ & $0.0778$ \\
	        $\lambda=4$ & $0.2367$ & $0.0469$ & $0.2865$ & $0.0651$ & $0.3113$ & $0.0793$ \\
			\hline
		\end{tabular}
	\end{center}
\end{table}

Theorem~\ref{T:arcclt} follows from Theorem~\ref{T:asyexp} and Theorem~\ref{T:AsymAlgPra}, setting
$f(x,e^s)=\mathbf{S}_{\lambda}^{[r]}(x,e^s)$. 

We display the distribution of the numbers of arcs for sequence length $n=400$ in Fig.~\ref{F:unclt},
observing that the expected number of arcs drops from $111$ to $95$, when increasing the minimum
arc-length from $2$ to $4$.

\begin{figure}
	\centering
	\includegraphics[width=0.6\textwidth]{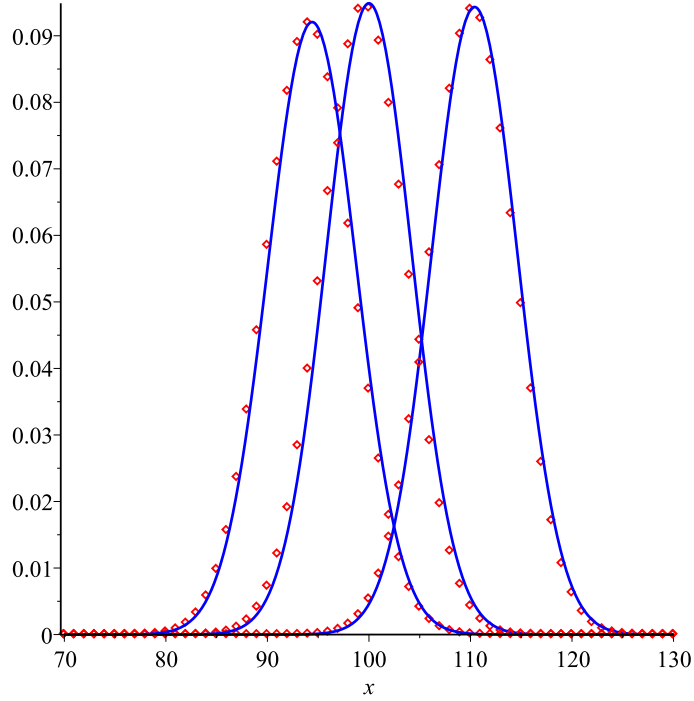}
	\caption
	    {\small The distribution of the number of arcs of $1$-canonical secondary structures of length $n=400$ for
              $\lambda=2$(R), $\lambda=3$(M), $\lambda=4$(L).
	}
	\label{F:unclt}
\end{figure}


\section{Main results}\label{S:PT}



\subsection{Structures having Purine-Pyrimidine base pairs}


We consider RNA structures, $S$, over sequences having only two types of nucleobases: purines ($R$) and
pyrimidines ($Y$), having the base pairs $(R,Y)$ and $(Y,R)$.

For any $p\in ]0,1[$ and $n\in \mathbb{N}$, let $s_{p,\lambda}^{[r]}(n)$ denote the number of $r$-canonical RNA
    secondary structures for which there exists at least one compatible sequence, containing
    $\lfloor  p n \rfloor$ purine bases and $\lceil(1-p)n\rceil$ pyrimidine bases, respectively. Let
    furthermore $s_{p,\lambda}^{[r]}(n,l)$ denote the corresponding number of structures, filtered by the number of arcs.

\begin{remark}
	In view of $\lfloor  p n \rfloor/{n} =p+O(n^{-1})$, $s_{p,\lambda}^{[r]}(n)$ or $s_{p,\lambda}^{[r]}(n,l)$, count in
	the limit of large sequence lengths, secondary structures of sequences whose percentage of purine
	bases equals $p$.
\end{remark}

\begin{proposition}
	For any $n,r,l,\lambda\in \mathbb{N}$ and $p\in ]0,1[$, we have 
	\begin{equation}\label{Eq:pp}
	\begin{aligned}
	s_{p,\lambda}^{[r]}(n,l)&= s_{\lambda}^{[r]}(n,l) \  \quad \text{if } l\leq
	\min( \lfloor p n\rfloor ,\lceil (1-p) n\rceil),\\
	s_{p,\lambda}^{[r]}(n,l)&= 0 \qquad \qquad \text{if } l>  \min( \lfloor p n\rfloor ,
	\lceil (1-p) n\rceil,
	\end{aligned}
	\end{equation}
	and
	\begin{equation}\label{Eq:ppsum}
	\begin{aligned}
	s_{p,\lambda}^{[r]}(n)&= \sum_{l=0}^{\min( \lfloor p n \rfloor,\lceil (1-p) n\rceil) } s_{\lambda}^{[r]}(n,l).
	\end{aligned}
	\end{equation}
\end{proposition}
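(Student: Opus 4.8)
The plan is to reduce the statement to an elementary counting observation about how base pairs consume nucleotides. Fix an $r$-canonical secondary structure $S$ of length $n$ with exactly $l$ arcs. Since the only admissible base pairs are $(R,Y)$ and $(Y,R)$, each arc of $S$ is incident to exactly one purine and one pyrimidine in any compatible sequence. Because $S$ contains no base triples, the $2l$ vertices incident to arcs are pairwise distinct, so these arcs jointly account for exactly $l$ purines and $l$ pyrimidines, regardless of how the endpoints are oriented. The remaining $n-2l$ vertices are unpaired, and compatibility imposes no constraint on unpaired positions — nor any constraint linking distinct arcs, since the $r$-canonicity and secondary-structure axioms restrict only the diagram, not the sequence content — so each unpaired vertex may be independently assigned an $R$ or a $Y$.

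First I would record the resulting characterization: a sequence compatible with $S$ having $k$ purines (and hence $n-k$ pyrimidines) exists if and only if $l\le k\le n-l$. Indeed, producing $k$ purines requires assigning an $R$ to $k-l$ of the $n-2l$ unpaired vertices, which is possible precisely when $0\le k-l\le n-2l$, i.e.\ $l\le k\le n-l$; the $R/Y$ assignment of the two endpoints of each arc is then irrelevant to the total count and can be made arbitrarily.

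Next I would substitute the prescribed ratios. The target counts are $\lfloor pn\rfloor$ purines and $\lceil(1-p)n\rceil$ pyrimidines, and $\lceil(1-p)n\rceil=\lceil n-pn\rceil=n-\lfloor pn\rfloor$, so the two counts are consistent and sum to $n$. By the characterization, $S$ admits such a compatible sequence if and only if $l\le\lfloor pn\rfloor$ and $l\le n-\lfloor pn\rfloor=\lceil(1-p)n\rceil$, that is, $l\le\min(\lfloor pn\rfloor,\lceil(1-p)n\rceil)$. Since this condition depends on $S$ only through its number of arcs, either every structure counted by $s_{\lambda}^{[r]}(n,l)$ has a compatible sequence of the required ratio (when $l\le\min(\lfloor pn\rfloor,\lceil(1-p)n\rceil)$) or none does (otherwise); this yields the two displayed identities for $s_{p,\lambda}^{[r]}(n,l)$ in eq.~(\ref{Eq:pp}). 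Summing over $l$ and using that $s_{p,\lambda}^{[r]}(n,l)=0$ for $l>\min(\lfloor pn\rfloor,\lceil(1-p)n\rceil)$ gives eq.~(\ref{Eq:ppsum}).

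The argument has no genuine obstacle; the only points requiring a little care are the floor/ceiling bookkeeping $\lceil(1-p)n\rceil=n-\lfloor pn\rfloor$, and the observation that the secondary-structure axioms together with $r$-canonicity place no constraint on the primary sequence beyond the per-arc base-pairing rule, so that the unpaired vertices and the orientations of the arcs can indeed be chosen freely and independently.
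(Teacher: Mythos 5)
Your proof is correct and follows essentially the same route as the paper's: each arc consumes exactly one purine and one pyrimidine while unpaired vertices are unconstrained, so the existence of a compatible sequence with the prescribed counts reduces to $l\le\min(\lfloor pn\rfloor,\lceil(1-p)n\rceil)$. Your write-up is in fact slightly more careful than the paper's, making the sufficiency direction (the explicit construction via assigning $k-l$ of the $n-2l$ unpaired vertices to $R$) and the identity $\lceil(1-p)n\rceil=n-\lfloor pn\rfloor$ fully explicit.
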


\begin{proof}
	We shall show that any such structure having $l$ arcs is counted by $s_{p,\lambda}^{[r]}(n,l)$ iff
	$l\le \lfloor pn\rfloor$ and
	$l\le \lceil (1-p)n\rceil$. 
	Given a structure $S$ having $l$ arcs, each arc corresponds to a pair $(R,Y)$ or $(Y,R)$ for any
	compatible sequence,
	while each unpaired vertex could be either $R$ or $Y$.
	Accordingly, in any compatible sequence, the number of purine
	bases is at least $l$.
	For $S$ to be counted by $s_{p,\lambda}^{[r]}(n,l)$ the existence of $l$ arcs thus implies $l\le \lfloor pn\rfloor$. Since we have
	$\lceil (1-p)n\rceil$ pyrimidine bases in such a sequence, we observe for the same reason $l\le \lceil (1-p)n\rceil$.
	In case of $l>  \min( \lfloor p n\rfloor , \lceil (1-p) n\rceil)$ we either have not enough purine or pyrimidine bases,
	whence $s_{p,\lambda}^{[r]}(n,l)= 0$ and the proposition follows.
\end{proof}


\begin{theorem}\label{T:ppclt}
	For $1\leq \lambda \leq 4$, $1\leq r\leq 3$, $p\in ]0,\frac{1}{2}]$ and $n\in\mathbb{N}$, we have
	\begin{equation}\label{Eq:estimate}
	\frac{s_{p,\lambda}^{[r]}(n)}{s_{\lambda}^{[r]}(n)}=
	\frac{1}{\sqrt{2\pi}}\int_{-\infty}^{\frac{(p-\mu_{\lambda}^{[r]})\sqrt{n}}{\sigma_{\lambda}^{[r]}}}\,e^{-\frac{1}{2}t^2} \mathrm{d} t +
	O(n^{-\frac{1}{2}}),
	\end{equation}
	where $\mu_\lambda^{[r]},\sigma_\lambda^{[r]}$ are the mean and standard deviation of $\mathbb{Y}_{\lambda,n}^{[r]}$,
        respectively.

        Equivalently, for $p\in ]\mu_\lambda^{[r]},\frac{1}{2}]$,
        a random structure a.a.s. has a compatible sequence with nucleotide ratio $p$. Conversely, in case of
        $p\in]0,\mu_\lambda^{[r]}[$, a.a.s. no random structure has a compatible sequence.
\end{theorem}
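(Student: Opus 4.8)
The plan is to read the quotient $s_{p,\lambda}^{[r]}(n)/s_{\lambda}^{[r]}(n)$ as the value of the distribution function of $\mathbb{Y}_{\lambda,n}^{[r]}$ at the threshold $\lfloor pn\rfloor$, and then to feed this into the central limit theorem of Theorem~\ref{T:arcclt}. First, since $p\in\,]0,\tfrac12]$ one has $\lfloor pn\rfloor\le pn\le(1-p)n\le\lceil(1-p)n\rceil$, so the summation range in \eqref{Eq:ppsum} is exactly $0\le l\le\lfloor pn\rfloor$; combined with $\mathbb{P}(\mathbb{Y}_{\lambda,n}^{[r]}=l)=s_{\lambda}^{[r]}(n,l)/s_{\lambda}^{[r]}(n)$ this gives
\[
\frac{s_{p,\lambda}^{[r]}(n)}{s_{\lambda}^{[r]}(n)}=\mathbb{P}\big(\mathbb{Y}_{\lambda,n}^{[r]}\le\lfloor pn\rfloor\big)=\mathbb{P}\Big(\mathbb{Y}^{[r],*}_{n,\lambda}\le y_n\Big),\qquad y_n:=\frac{\lfloor pn\rfloor-\mu_{\lambda}^{[r]}n}{\sqrt{n}\,\sigma_{\lambda}^{[r]}},
\]
with $\mathbb{Y}^{[r],*}_{n,\lambda}$ the normalized variable of Theorem~\ref{T:arcclt}. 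Since $\lfloor pn\rfloor=pn+O(1)$, the threshold satisfies $y_n=\dfrac{(p-\mu_{\lambda}^{[r]})\sqrt n}{\sigma_{\lambda}^{[r]}}+O(n^{-1/2})$.

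Next I would invoke Theorem~\ref{T:arcclt} in the precise form it is stated there, namely (via Bender's Theorem~\ref{T:normal}) the Berry--Esseen type \emph{uniform} estimate $\sup_{x\in\mathbb{R}}\bigl|\mathbb{P}(\mathbb{Y}^{[r],*}_{n,\lambda}\le x)-\mathcal G(x)\bigr|=O(n^{-1/2})$, where $\mathcal G(x)=\frac{1}{\sqrt{2\pi}}\int_{-\infty}^{x}e^{-t^2/2}\,\mathrm dt$. Evaluating at the moving point $x=y_n$ gives $\mathbb{P}(\mathbb{Y}^{[r],*}_{n,\lambda}\le y_n)=\mathcal G(y_n)+O(n^{-1/2})$, and since $\mathcal G$ is globally Lipschitz with constant $1/\sqrt{2\pi}$ one may replace $y_n$ by its leading term at the cost of another $O(n^{-1/2})$, yielding
\[
\frac{s_{p,\lambda}^{[r]}(n)}{s_{\lambda}^{[r]}(n)}=\mathcal G\!\left(\frac{(p-\mu_{\lambda}^{[r]})\sqrt n}{\sigma_{\lambda}^{[r]}}\right)+O(n^{-1/2}),
\]
which is \eqref{Eq:estimate}. (Passing between ``$\le$'' and ``$<$'' perturbs $y_n$ by $1/(\sqrt n\,\sigma_{\lambda}^{[r]})=O(n^{-1/2})$ and is harmless, as $\mathcal G$ is continuous.) I expect the only genuine obstacle to be exactly this point: one must use the \emph{uniform} Gaussian approximation rather than convergence at a fixed argument, because the relevant threshold $y_n$ escapes to $\pm\infty$; everything else is routine bookkeeping, and the hypotheses $1\le\lambda\le4$, $1\le r\le3$ are simply inherited from Theorems~\ref{T:asyexp} and~\ref{T:arcclt}.

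Finally, the stated dichotomy is read off \eqref{Eq:estimate} by letting $n\to\infty$. For $p\in\,]\mu_{\lambda}^{[r]},\tfrac12]$ the argument $(p-\mu_{\lambda}^{[r]})\sqrt n/\sigma_{\lambda}^{[r]}\to+\infty$, so $\mathcal G$ of it tends to $1$ and hence $s_{p,\lambda}^{[r]}(n)/s_{\lambda}^{[r]}(n)\to1$, i.e.~a random structure a.a.s.~has a compatible sequence of ratio $p$; for $p\in\,]0,\mu_{\lambda}^{[r]}[$ the argument tends to $-\infty$, $\mathcal G$ of it tends to $0$, and the quotient tends to $0$, i.e.~a.a.s.~no random structure does. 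The interval $]\mu_{\lambda}^{[r]},\tfrac12]$ is nonempty in every case considered because $\mu_{\lambda}^{[r]}<\tfrac12$ throughout the range in Table~\ref{Tab:clt}.
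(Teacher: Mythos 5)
Your proposal is correct and follows essentially the same route as the paper's own proof: identify the quotient with $\mathbb{P}(\mathbb{Y}_{\lambda,n}^{[r]}\le\lfloor pn\rfloor)$ via eq.~(\ref{Eq:pp}) and $p\le\tfrac12$, apply the central limit theorem of Theorem~\ref{T:arcclt} at the moving threshold, and absorb the $\lfloor pn\rfloor$-versus-$pn$ discrepancy into the $O(n^{-1/2})$ term via the Lipschitz bound on the Gaussian distribution function. Your explicit remark that the \emph{uniform} $O(n^{-1/2})$ Gaussian approximation is needed (since the threshold escapes to $\pm\infty$) is the same point the paper uses implicitly when invoking the speed of convergence in Theorem~\ref{T:arcclt}.
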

\begin{proof}
	For $p\in ]0,\frac{1}{2}]$, we have
	\begin{align*}
	  \frac{s_{p,\lambda}^{[r]}(n)}{s_{\lambda}^{[r]}(n)}&=
          \sum_{l=0}^{ \min( \lfloor p n\rfloor ,\lceil (1-p) n\rceil) } \frac{s_{\lambda}^{[r]}(n,l)}{s_{\lambda}^{[r]}(n)} 
	= \sum_{l=0}^{ \lfloor p n\rfloor } \frac{s_{\lambda}^{[r]}(n,l)}{s_{\lambda}^{[r]}(n)}
	= \mathbb{P}(\mathbb{Y}_{\lambda,n}^{[r]}\leq  \lfloor p n\rfloor ),
	\end{align*}
	where the first equation employs eq.~(\ref{Eq:pp}).
	Theorem~\ref{T:arcclt} allows us to estimate $\mathbb{P}(\mathbb{Y}_{\lambda,n}^{[r]}\leq  \lfloor p n\rfloor )$ by the integral
        of the Gaussian density function with an $O(n^{-\frac{1}{2}})$ error term.
	Specifically, setting $x= \frac{\lfloor pn \rfloor -\mu_{\lambda}^{[r]}n }{\sigma_{\lambda}^{[r]}\sqrt{n}}$
	in eq.~(\ref{Eq:sclt}), we obtain 
	$$
	  \mathbb{P}(\mathbb{Y}_{\lambda,n}^{[r]}\leq  \lfloor p n\rfloor )=
          \frac{1}{\sqrt{2\pi}}\int_{-\infty}^{\frac{\lfloor pn \rfloor -\mu_{\lambda}^{[r]}n }{\sigma_{\lambda}^{[r]}\sqrt{n}}}\,e^{-\frac{1}{2}t^2} \mathrm{d} t
          + O(n^{-\frac{1}{2}})
	=\frac{1}{\sqrt{2\pi}}\int_{-\infty}^{\frac{(p-\mu_{\lambda}^{[r]})\sqrt{n}}{\sigma_{\lambda}^{[r]}}}\,e^{-\frac{1}{2}t^2} \mathrm{d} t + O(n^{-\frac{1}{2}}), 
	$$
	where the last equality is implied by
	\begin{align*}
	\Big|\int_{-\infty}^{\frac{(p-\mu_{\lambda}^{[r]})\sqrt{n}}{\sigma_{\lambda}^{[r]}}}\,e^{-\frac{1}{2}t^2} \mathrm{d} t -\int_{-\infty}^{\frac{\lfloor pn \rfloor -\mu_{\lambda}^{[r]}n }{\sigma_{\lambda}^{[r]}\sqrt{n}}}\,e^{-\frac{1}{2}t^2} \mathrm{d} t \Big|& =\Big| \int_{\frac{\lfloor pn \rfloor -\mu_{\lambda}^{[r]}n }{\sigma_{\lambda}^{[r]}\sqrt{n}}}^{\frac{pn-\mu_{\lambda}^{[r]}n}{\sigma_{\lambda}^{[r]}\sqrt{n}}}\,e^{-\frac{1}{2}t^2} \mathrm{d} t\Big|\\
	&\le \frac{1}{\sigma_{\lambda}^{[r]}\sqrt{n}}\cdot e^{-\frac{1}{2}t^2}\Big|_{t=0} = O(n^{-\frac{1}{2}}),
	\end{align*}
	whence eq.~(\ref{Eq:estimate}). {For $p\in]\mu_\lambda^{[r]},\frac{1}{2}]$} 
	\[
	\frac{s_{p,\lambda}^{[r]}(n)}{s_{\lambda}^{[r]}(n)}\geq
	\frac{1}{\sqrt{2\pi}}\int_{-\infty}^{\frac{\epsilon\sqrt{n}}{\sigma_{\lambda}^{[r]}}}\,e^{-\frac{1}{2}t^2} \mathrm{d} t +
	O(n^{-\frac{1}{2}})\rightarrow 1,  \qquad \text{as }  n\rightarrow \infty,	
	\] implying that asymptotically, almost surely any structure has a compatible sequence with nucleotide ratio $p$
        and the theorem follows. 
\end{proof}

For any Gaussian distributed random variable, $\mathbb{X}$, we have
\begin{align*}
  \mathbb{P}(\mu-2\sigma \leq \mathbb{X}\leq\mu+2\sigma ) &=\frac{1}{\sigma\sqrt{2\pi}}\int_{\mu-2\sigma}^{\mu+2\sigma}\,e^{-\frac{(t-\mu)^2}{2\sigma^2}}
  \approx 0.9545,\\
  \mathbb{P}(\mu-3\sigma \leq \mathbb{X}\leq\mu+3\sigma ) &=\frac{1}{\sigma\sqrt{2\pi}}\int_{\mu-3\sigma}^{\mu+3\sigma}\,e^{-\frac{(t-\mu)^2}{2\sigma^2}}
  \approx 0.9973,
\end{align*}
i.e.~the fraction of its values within two and three standard deviations away from its mean is approximately $0.9545$ and
$0.9973$, respectively. In Fig.~\ref{F:threshold} we display the limit distribution.
 \begin{figure*}
 	\begin{tabular}{cc}
 		\includegraphics[width=0.5\textwidth]{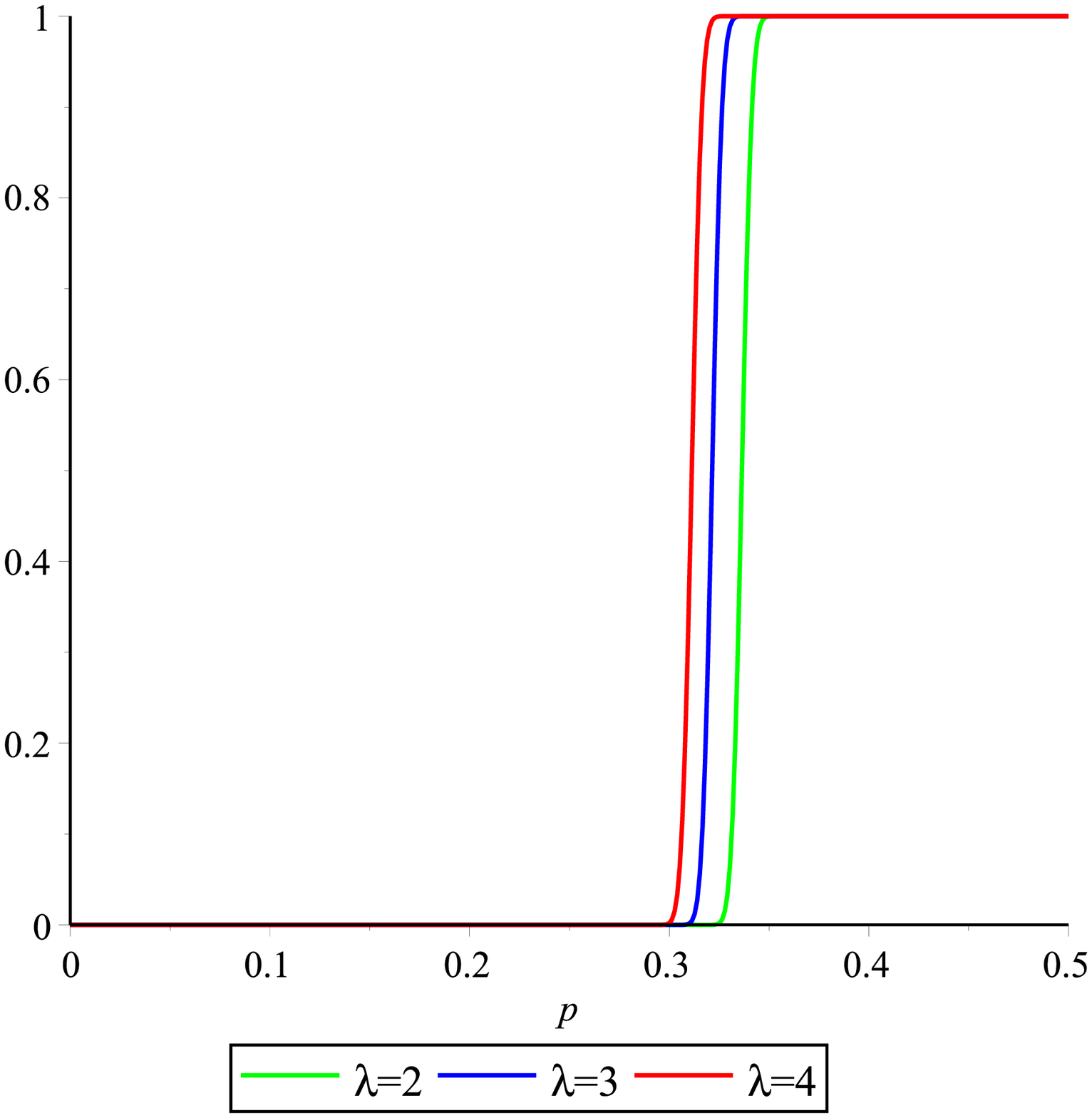}&
 		\includegraphics[width=0.5\textwidth]{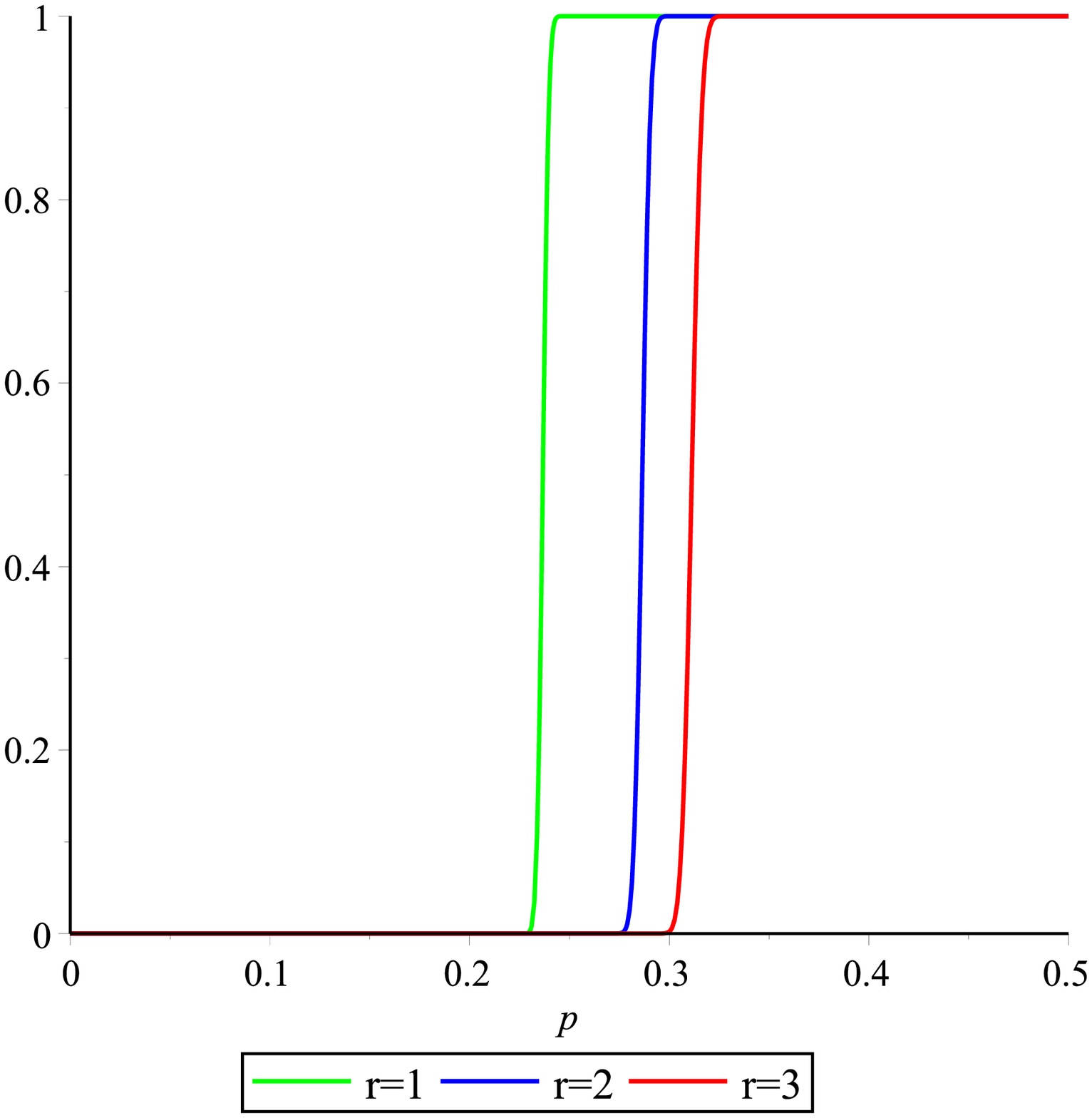}\\
 	\end{tabular}
 	\caption{\small LHS: $\frac{s_{p,\lambda}^{[r]}(n)}{s_\lambda^{[r]}(n)}$ as a function of $p$ for ${\bf GC}$-sequences of length $n=400$ and $3$-canonical structures subject to	minimum arc-length restrictions $\lambda=2$(green), $3$(blue) and $4$(red), respectively.
          RHS:  $\frac{s_{p,\lambda}^{[r]}(n)}{s_\lambda^{[r]}(n)}$ as a function of $p$ for ${\bf GC}$-sequences of length $n=400$ and $r$-canonical structures with minimum arc-length $4$ subject to	minimum stack-length restrictions $r=1$(green), $2$(blue) and $3$(red), respectively.
 	} \label{F:threshold}
 \end{figure*}

\subsection{Watson-Crick base pairs}


We next consider RNA secondary structures realized by sequences composed by the four nucleotides
\textbf{A}, \textbf{U}, \textbf{C}, \textbf{G}, assuming Watson-Crick (\textbf{A-U}, \textbf{C-G}) base pairs.

For any  $n\in \mathbb{N}$ and $\mathbf{p}=(p_1,p_2,p_3,p_4)$, where $p_i\in ]0,1[$ and
$\sum_{i=1}^{4} p_i=1$, let $s_{\mathbf{p},\lambda}^{[r]}(n)$ denote the number of $r$-canonical RNA secondary structures
for which there exists at least one compatible \textbf{A}\textbf{U}\textbf{C}\textbf{G}-sequence containing
$\lfloor p_1 n\rfloor$,
$\lfloor p_2 n\rfloor$, $\lfloor p_3 n \rfloor$, and $n-\lfloor p_1 n\rfloor-\lfloor p_2 n\rfloor-
\lfloor p_3 n \rfloor$ nucleotides, respectively. Let $s_{\mathbf{p},\lambda}^{[r]}(n,l)$ be defined analogously.
Obviously, as observed for two letter sequences, as the sequence length approaches infinity, 
\[
\Big(\frac{\lfloor p_1 n\rfloor}{n},\frac{\lfloor p_2 n\rfloor}{n},\frac{\lfloor p_3 n \rfloor}{n},
\frac{n-(\lfloor p_1 n\rfloor+\lfloor p_2 n\rfloor+\lfloor p_3 n \rfloor)}{n}\Big)= (p_1,p_2,p_3,p_4)+O(n^{-1}).
\]

\begin{proposition}\label{P:wc}
	For any $n,r,l,\lambda \in \mathbb{N}$ and $\mathbf{p}$, let
	$$
	l_0=\min(\lfloor p_1 n\rfloor ,\lfloor p_2n\rfloor)+
	\min(\lfloor p_3 n\rfloor ,n-(\lfloor p_1 n\rfloor+\lfloor p_2 n\rfloor+\lfloor p_3 n \rfloor)).
	$$
	Then we
	have 
	\begin{equation}\label{Eq:pp2}
	\begin{aligned} 
	s_{\mathbf{p},\lambda}^{[r]}(n,l)&= s_\lambda^{[r]}(n,l) \qquad \text{if } l\leq l_0 ,\\
	s_{\mathbf{p},\lambda}^{[r]}(n,l)&= 0 \qquad \text{ $\qquad$ otherwise},
	\end{aligned}
	\end{equation}
	and
	\begin{equation}\label{Eq:ppsum2}
	s_{\mathbf{p},\lambda}^{[r]}(n)= \sum_{l=0}^{l_0 } s_\lambda^{[r]}(n,l).
	\end{equation}	
\end{proposition}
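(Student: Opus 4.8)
The plan is to mimic the proof of the two-letter Proposition, reducing the Watson-Crick case to a counting argument about how many arcs a fixed secondary structure can accommodate given the prescribed nucleotide budget. The key observation is that with Watson-Crick pairs only, each arc of a structure $S$ must be labeled either $(\mathbf{A},\mathbf{U})$/$(\mathbf{U},\mathbf{A})$ or $(\mathbf{C},\mathbf{G})$/$(\mathbf{G},\mathbf{C})$; there is no base pair mixing the two complementary classes. Hence if $S$ has $l$ arcs and we assign $a$ of them to the $\mathbf{AU}$-class and $l-a$ to the $\mathbf{CG}$-class, a compatible sequence with exactly the prescribed letter counts exists if and only if we can choose $a$ so that $a \le \lfloor p_1 n\rfloor$, $a\le\lfloor p_2 n\rfloor$, $l-a\le\lfloor p_3 n\rfloor$, and $l-a\le n-\lfloor p_1 n\rfloor-\lfloor p_2 n\rfloor-\lfloor p_3 n\rfloor$ (the $\mathbf{AU}$-arcs consume one $\mathbf{A}$ and one $\mathbf{U}$ each, the $\mathbf{CG}$-arcs one $\mathbf{C}$ and one $\mathbf{G}$ each, and all remaining vertices are unpaired and can be filled by any leftover letters).

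First I would make precise the ``if and only if'': given any assignment of arcs to the two classes respecting the four inequalities, one fills the $2a$ endpoints of $\mathbf{AU}$-arcs alternately with $\mathbf{A}$'s and $\mathbf{U}$'s (respecting orientation of each arc), the $2(l-a)$ endpoints of $\mathbf{CG}$-arcs with $\mathbf{C}$'s and $\mathbf{G}$'s, and the remaining $n-2l$ unpaired positions with the leftover letters in any order; this produces a sequence with exactly the prescribed counts that is compatible with $S$. Conversely any compatible sequence induces such an assignment. So the condition for $S$ (with $l$ arcs) to be counted by $s_{\mathbf{p},\lambda}^{[r]}(n,l)$ is exactly: there exists an integer $a$ with $0\le a\le l$, $a\le\min(\lfloor p_1 n\rfloor,\lfloor p_2 n\rfloor)$ and $l-a\le\min(\lfloor p_3 n\rfloor,\,n-\lfloor p_1 n\rfloor-\lfloor p_2 n\rfloor-\lfloor p_3 n\rfloor)$.

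Next I would observe that such an $a$ exists precisely when $l\le l_0$, where $l_0$ is the sum of the two minima in the statement. Indeed, the feasible $a$ must satisfy $\max(0,\,l-m_2)\le a\le\min(l,m_1)$ where $m_1=\min(\lfloor p_1 n\rfloor,\lfloor p_2 n\rfloor)$ and $m_2=\min(\lfloor p_3 n\rfloor,\,n-\lfloor p_1 n\rfloor-\lfloor p_2 n\rfloor-\lfloor p_3 n\rfloor)$; this interval is nonempty iff $l-m_2\le m_1$ and $0\le l$ and $l-m_2\le l$ and $0\le m_1$, and since $m_1,m_2\ge 0$ (because each $p_i\in\,]0,1[$ forces each floor to be at least $0$, and for $n$ large at least $1$), the binding constraint is simply $l\le m_1+m_2=l_0$. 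This establishes that $s_{\mathbf{p},\lambda}^{[r]}(n,l)=s_\lambda^{[r]}(n,l)$ for $l\le l_0$ and $=0$ otherwise, because the class-assignment freedom does not change which structures are realizable — every structure with at most $l_0$ arcs is realizable and every structure with more is not. Summing over $l$ then gives eq.~(\ref{Eq:ppsum2}).

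The only real subtlety — and the step I would be most careful about — is the correctness of the ``only two arc-classes, freely assignable'' claim: one must check that assigning classes to arcs is genuinely unconstrained by the nesting/crossing structure of $S$ (it is, since secondary structures are noncrossing and an arc's label only couples its own two endpoints) and that the unpaired vertices can always absorb the residual letter counts (they can, since the total is $n$ by construction and the arcs consume exactly $2l$ positions with a balanced draw from the letter pool). I would also note in passing that the bound $l\le\lfloor (n+1-\lambda)/2\rfloor$ from the structure side is automatically compatible. With these checks, the proof is a direct transcription of the Purine-Pyrimidine argument with the single refinement that the arc budget splits into two independent sub-budgets whose capacities add.
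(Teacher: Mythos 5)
Your proposal is correct and follows essentially the same route as the paper: both reduce the question to the feasibility of splitting the $l$ arcs into an $\mathbf{AU}$-budget and a $\mathbf{CG}$-budget subject to the four floor-constraints (the paper phrases this as an integer linear program maximizing $l_1+l_2$ with optimum $l_0$, you phrase it as nonemptiness of the interval of admissible $a$, which is the same computation). Your explicit verification that the class assignment is unconstrained by the arc structure and that the unpaired vertices absorb the residual letter counts is a welcome tightening of a step the paper leaves implicit.
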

\begin{proof}
	We prove that any structure having $l$ arcs is counted by $s_{\mathbf{p},\lambda}^{[r]}(n,l)$ if $l\le l_0$.
	Given a structure $S$ having $l$ arcs, each arc corresponds to an \textbf{A-U} pair or a \textbf{C-G} pair
	for any compatible sequence, while each unpaired vertex could be either one of the four bases.
	Suppose we have $l_1$ \textbf{A-U} and $l_2$ \textbf{C-G} base pairs. Then $l_1\le \lfloor p_1 n\rfloor$ and
	since $\lfloor p_2 n\rfloor$ is the number of \textbf{U} nucleotides, we have $l_1 \le \lfloor p_2 n\rfloor$.
	The case of \textbf{C-G} pairs is treated analogously. Thus if $l\le l_0$ $S$ is counted by
	$s_{\mathbf{p},\lambda}^{[r]}(n,l)$. We inspect that then for any $l'\le l$ any structure containing $l'$ arcs is
	counted by $s_{\mathbf{p},\lambda}^{[r]}(n,l)$.
	
	We thus arrive at the following integer linear programming problem:
	\begin{align*}
	\text{maximize: }\quad &l=l_1+l_2\\
	\text{subject to }\quad & l_1 \leq \lfloor p_1 n\rfloor,\, l_1 \leq
	\lfloor p_2 n\rfloor,\, l_2 \leq \lfloor p_3 n\rfloor,\, l_2 \leq
	n-(\lfloor p_1 n\rfloor+\lfloor p_2 n\rfloor+\lfloor p_3 n \rfloor) \\
	\text{and } \quad & l_1\geq  0,\, l_2\geq 0,\, l_1,l_2\in \mathbb{Z}.
	\end{align*}
	Here $l_0=\min(\lfloor p_1 n\rfloor ,\lfloor p_2 n\rfloor )+\min(\lfloor p_3 n\rfloor ,
	n-(\lfloor p_1 n\rfloor+\lfloor p_2 n\rfloor+\lfloor p_3 n \rfloor) )$ is a solution that
	is by construction unique.
	In case of $l> l_0$, $l$ is of the form
	$l=l_1+l_2$ such that
	$l_1 \geq \lfloor p_1 n\rfloor$ or  $l_1 \geq \lfloor p_2 n\rfloor$ or
	$l_2 \geq \lfloor p_3 n\rfloor$, or $l_2 \geq n-(\lfloor p_1 n\rfloor+\lfloor p_2 n
	\rfloor+\lfloor p_3 n \rfloor)$. This means however, that we have not enough of one type of the four
	bases to satisfy all $l$ base pairings of the structure, whence $s_{\mathbf{p},\lambda}^{[r]}(n,l)= 0$.
\end{proof}

\begin{theorem}\label{T:wc}
	Given any $1\leq \lambda \leq 4$, $1\leq r \leq 3$, any $n\in\mathbb{N}$ and $\mathbf{p}$, let
	$p_0=\min(p_1 ,p_2)+\min(p_3 ,p_4 )$. 
	Then we have
	\begin{equation}\label{Eq:estimate2}
	\frac{s_{\mathbf{p},\lambda}^{[r]}(n)}{s_\lambda^{[r]}(n)}= \frac{1}{\sqrt{2\pi}}
	\int_{-\infty}^{\frac{(p_0-\mu_\lambda^{[r]})\sqrt{n}}{\sigma_\lambda^{[r]}}}\,e^{-\frac{1}{2}t^2} \mathrm{d} t +
	O(n^{-\frac{1}{2}}),
	\end{equation}
	where $\mu_\lambda^{[r]},\sigma_\lambda^{[r]}$ are the mean and  standard deviation of $\mathbb{Y}_{\lambda,n}^{[r]}$.

        Equivalently, for $p_0\in]\mu_\lambda^{[r]},\frac{1}{2}]$,  a random structure a.a.s. has a compatible sequence with nucleotide ratio $p$. Conversely, in case of $p_0\in]0,\mu_\lambda^{[r]}[$, a.a.s. no random structure has a compatible sequence.
\end{theorem}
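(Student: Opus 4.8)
The plan is to mirror the proof of Theorem~\ref{T:ppclt}, using Proposition~\ref{P:wc} in place of the two-letter counting identity. First I would invoke eq.~(\ref{Eq:ppsum2}) to rewrite
\[
\frac{s_{\mathbf{p},\lambda}^{[r]}(n)}{s_\lambda^{[r]}(n)} = \sum_{l=0}^{l_0} \frac{s_\lambda^{[r]}(n,l)}{s_\lambda^{[r]}(n)} = \mathbb{P}\big(\mathbb{Y}_{\lambda,n}^{[r]} \le l_0\big),
\]
so that the entire statement reduces to a tail estimate for the arc-number random variable $\mathbb{Y}_{\lambda,n}^{[r]}$ at the threshold $l_0$.

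The second step is purely arithmetic: show that $l_0 = p_0\, n + O(1)$. Writing $p_4 = 1-p_1-p_2-p_3$ and using $\lfloor p_i n\rfloor = p_i n + O(1)$, one gets $n-(\lfloor p_1 n\rfloor+\lfloor p_2 n\rfloor+\lfloor p_3 n \rfloor) = p_4 n + O(1)$; since the minimum of two quantities each within $O(1)$ of target values is within $O(1)$ of the minimum of the targets, $l_0 = \min(p_1,p_2)\,n + \min(p_3,p_4)\,n + O(1) = p_0\, n + O(1)$. I would also record here the elementary inequality $p_0 = \min(p_1,p_2)+\min(p_3,p_4) \le \tfrac{p_1+p_2}{2}+\tfrac{p_3+p_4}{2} = \tfrac12$, so that $p_0$ automatically lies in $]0,\tfrac12]$ and the two regimes $]\mu_\lambda^{[r]},\tfrac12]$ and $]0,\mu_\lambda^{[r]}[$ in the statement cover all admissible $\mathbf{p}$ (recall $\mu_\lambda^{[r]}<\tfrac12$ from Table~\ref{Tab:clt}).

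Next I would feed $l_0$ into Theorem~\ref{T:arcclt}: setting $x=(l_0-\mu_\lambda^{[r]}n)/(\sigma_\lambda^{[r]}\sqrt n)$ in eq.~(\ref{Eq:sclt}) expresses $\mathbb{P}(\mathbb{Y}_{\lambda,n}^{[r]}\le l_0)$ as a Gaussian integral up to $x$ with an $O(n^{-1/2})$ error. Replacing $l_0$ by $p_0 n$ in the upper limit moves that limit by $O(1/(\sigma_\lambda^{[r]}\sqrt n)) = O(n^{-1/2})$, hence changes the integral by at most the standard Gaussian density (bounded by $1$ at $t=0$) times $O(n^{-1/2})$, i.e. by $O(n^{-1/2})$ — this is exactly the estimate already carried out in the proof of Theorem~\ref{T:ppclt}. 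This yields eq.~(\ref{Eq:estimate2}). Finally, for $p_0\in]\mu_\lambda^{[r]},\tfrac12]$ the upper limit $(p_0-\mu_\lambda^{[r]})\sqrt n/\sigma_\lambda^{[r]}\to+\infty$, so the integral tends to $1$ and a.a.s.\ a random structure admits a compatible sequence of ratios $\mathbf{p}$; for $p_0\in]0,\mu_\lambda^{[r]}[$ the limit tends to $-\infty$, the integral tends to $0$, and a.a.s.\ no such sequence exists.

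The only genuinely new ingredient relative to Theorem~\ref{T:ppclt} is Proposition~\ref{P:wc}, which via a small integer linear program identifies the combinatorial threshold as $l_0$ with $l_0/n\to p_0=\min(p_1,p_2)+\min(p_3,p_4)$; once this threshold is in hand the analytic part is routine. Accordingly I expect the main obstacle to be conceptual rather than computational: recognizing that the relevant quantity is $\min(p_1,p_2)+\min(p_3,p_4)$ — the maximal number of \textbf{A}\textbf{-}\textbf{U} plus \textbf{C}\textbf{-}\textbf{G} pairs realizable with the prescribed letter counts — rather than some finer function of $\mathbf{p}$, and that has effectively already been settled by Proposition~\ref{P:wc}.
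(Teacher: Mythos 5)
Your proposal is correct and follows essentially the same route as the paper's proof: reduce $s_{\mathbf{p},\lambda}^{[r]}(n)/s_\lambda^{[r]}(n)$ to $\mathbb{P}(\mathbb{Y}_{\lambda,n}^{[r]}\le l_0)$ via Proposition~\ref{P:wc}, apply the central limit theorem of Theorem~\ref{T:arcclt}, and absorb the replacement of $l_0$ by $p_0 n$ (the paper records $|l_0-p_0 n|\le 2$, matching your $l_0=p_0 n+O(1)$) into the $O(n^{-1/2})$ error. Your additional remark that $p_0\le\tfrac12$ always holds is a small but worthwhile clarification not spelled out in the paper.
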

\begin{proof} 
	Given $\mathbf{p}=(p_1,p_2,p_3,p_4)$, analogous to the proof of Theorem~\ref{T:ppclt}, we derive by
	eq.~(\ref{Eq:pp2})
	\begin{align*}
	\frac{s_{\mathbf{p},\lambda}^{[r]}(n)}{s_{\lambda}^{[r]}(n)}&= \sum_{l=0}^{ l_0 } \frac{s_{\lambda}^{[r]}(n,l)}{s_{\lambda}^{[r]}(n)} 
	= \mathbb{P}(\mathbb{Y}_{\lambda,n}^{[r]}\leq  l_0 )
	=\frac{1}{\sqrt{2\pi}}\int_{-\infty}^{\frac{l_0-\mu_{\lambda}^{[r]}n }{\sigma_{\lambda}^{[r]}\sqrt{n}}}\,e^{-\frac{1}{2}t^2} \mathrm{d} t + O(n^{-\frac{1}{2}})\\
	&=\frac{1}{\sqrt{2\pi}}\int_{-\infty}^{\frac{p_0n-\mu_{\lambda}^{[r]}n}{\sigma_{\lambda}^{[r]}\sqrt{n}}}\,e^{-\frac{1}{2}t^2} \mathrm{d} t + O(n^{-\frac{1}{2}}), 
	\end{align*}
	where the last equality is implied by the inspecting $|l_0- p_0 n |\le 2$ and 
	\begin{align*}
	  \Big|\int_{-\infty}^{\frac{l_0-\mu_{\lambda}^{[r]}n}{\sigma_{\lambda}^{[r]}\sqrt{n}}}\,e^{-\frac{1}{2}t^2} \mathrm{d} t -\int_{-\infty}^{\frac{ p_0 n  -
          \mu_{\lambda}^{[r]}n }{\sigma_{\lambda}^{[r]}\sqrt{n}}}\,e^{-\frac{1}{2}t^2} \mathrm{d} t \Big|& =\Big|
          \int_{\frac{ p_0 n  -\mu_{\lambda}^{[r]}n }{\sigma_{\lambda}^{[r]}\sqrt{n}}}^{\frac{l_0-\mu_{\lambda}^{[r]}n}{\sigma_{\lambda}^{[r]}\sqrt{n}}}\,
          e^{-\frac{1}{2}t^2} \mathrm{d} t\Big|\le \frac{2}{\sigma_{\lambda}^{[r]}\sqrt{n}}\cdot e^{-\frac{1}{2}t^2}\Big|_{t=0}
	= O(n^{-\frac{1}{2}}),
	\end{align*}
	whence eq.~(\ref{Eq:estimate2}) and the theorem is proved.
\end{proof}



\subsection{Watson-Crick and wobble base pairs}


We  consider RNA secondary structures over four letter sequences, having Watson-Crick
(\textbf{A-U}, \textbf{C-G}) and wobble (\textbf{U-G}) base pairs.

For any  $n\in \mathbb{N}$ and $\mathbf{p}=(p_1,p_2,p_3,p_4)$, where $p_i\in ]0,1[$ and
$\sum_{i=1}^{4} p_i=1$, let $ \bar{s}_{\mathbf{p},\lambda}^{[r]}(n)$ denote the number of RNA secondary structures
for which there exists at least one compatible \textbf{AUCG}-sequence containing
$\lfloor p_1 n\rfloor$, $\lfloor p_2 n\rfloor$, $\lfloor p_3 n \rfloor$, and
$n-\lfloor p_1 n\rfloor-\lfloor p_2 n\rfloor-\lfloor p_3 n \rfloor$ nucleotides, respectively.
Let $\bar{s}_{\mathbf{p},\lambda}^{[r]}(n,l)$ be this quantity, filtered by the number of arcs.

\begin{proposition}
	For any $n,r,l,\lambda \in \mathbb{N}$ and $\mathbf{p}$ , let
	\begin{eqnarray*}
		\bar{l}_0 & = & \min\{\min\{\lfloor p_1 n\rfloor ,\lfloor p_2 n\rfloor \}+
		n-(\lfloor p_1 n\rfloor+\lfloor p_2 n\rfloor+\lfloor p_3 n \rfloor),\\
		&&  \qquad\   \min\{\lfloor p_3 n\rfloor  ,n-(\lfloor p_1 n\rfloor+\lfloor
                p_2 n\rfloor+\lfloor p_3 n \rfloor) \}
		+\lfloor p_2 n\rfloor \}.
	\end{eqnarray*}
	Then we have 
	\begin{equation}\label{Eq:pp3}
	\begin{aligned} 
	\bar{s}_{\mathbf{p},\lambda}^{[r]}(n,l)&= s_\lambda^{[r]}(n,l) \qquad \text{if } l\leq \bar{l}_0,\\
	\bar{s}_{\mathbf{p},\lambda}^{[r]}(n,l)&= 0 \qquad\quad \text{otherwise } ,
	\end{aligned}
	\end{equation}
	and
	\begin{equation}\label{Eq:ppsum3}
	\bar{s}_{\mathbf{p},\lambda}^{[r]}(n)= \sum_{l=0}^{\bar{l}_0 } s_\lambda^{[r]}(n,l).
	\end{equation}	
\end{proposition}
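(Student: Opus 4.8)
The plan is to mirror the argument already used for the Watson–Crick case (Proposition~\ref{P:wc}), the only difference being that the indeterminacy in which base pair realizes an arc now gives rise to a different integer linear program because of the extra wobble pair $\mathbf{U}\text{-}\mathbf{G}$. First I would argue, exactly as before, that for a structure $S$ with $l$ arcs the relevant obstruction is purely numerical: each arc of $S$ must be labeled by one of $\mathbf{A}\text{-}\mathbf{U}$, $\mathbf{C}\text{-}\mathbf{G}$, or $\mathbf{U}\text{-}\mathbf{G}$, each paired vertex consumes one nucleotide of the corresponding type, and each unpaired vertex is free. Hence $S$ has a compatible $\mathbf{AUCG}$-sequence with the prescribed counts iff one can assign nonnegative integers $a$ (number of $\mathbf{A}\text{-}\mathbf{U}$ arcs), $c$ (number of $\mathbf{C}\text{-}\mathbf{G}$ arcs), $w$ (number of $\mathbf{U}\text{-}\mathbf{G}$ arcs), with $a+c+w=l$, subject to the supply constraints
\[
a\le \lfloor p_1 n\rfloor,\qquad c\le \lfloor p_3 n\rfloor,\qquad w\le n-(\lfloor p_1 n\rfloor+\lfloor p_2 n\rfloor+\lfloor p_3 n\rfloor),
\]
together with the coupling on the shared letter $\mathbf{U}$, namely $a+w\le \lfloor p_2 n\rfloor$, and the shared letter $\mathbf{G}$, namely $c+w\le n-(\lfloor p_1 n\rfloor+\lfloor p_2 n\rfloor+\lfloor p_3 n\rfloor)$ (note the $\mathbf{G}$ supply appears twice, which collapses the third displayed constraint into the last). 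As in the earlier proofs, monotonicity is automatic: if $l\le \bar l_0$ then so is every $l'\le l$, so counting ``at least one compatible sequence'' reduces to counting structures with at most $\bar l_0$ arcs, giving eqs.~(\ref{Eq:pp3}) and~(\ref{Eq:ppsum3}) once we identify $\bar l_0$ as the optimum of
\[
\text{maximize } a+c+w \text{ subject to the constraints above},\qquad a,c,w\in\mathbb{Z}_{\ge 0}.
\]

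The remaining task is therefore to show that this LP optimum equals the claimed $\bar l_0$. I would set $A=\lfloor p_1 n\rfloor$, $U=\lfloor p_2 n\rfloor$, $C=\lfloor p_3 n\rfloor$, $G=n-(A+U+C)$, so the active constraints are $a\le A$, $c\le C$, $a+w\le U$, $c+w\le G$, and the objective is $a+c+w$. The key structural observation is that the $\mathbf{U}$-constraint and the $\mathbf{G}$-constraint are the only ones that couple the variables, and each couples $w$ to exactly one of $a,c$; so at an optimum one should push $w$ as far as possible under whichever coupling is binding. Concretely I would check the two candidate extreme configurations: (i) spend the $\mathbf{U}$ budget entirely on wobble and $\mathbf{A}\text{-}\mathbf{U}$ arcs, taking $w$ up against $c+w\le G$ and filling in $\mathbf{C}\text{-}\mathbf{G}$ with the leftover $\mathbf{G}$—this yields value $\min\{A,U\}+G$ after accounting for the $\mathbf{U}$ supply that $\mathbf{A}\text{-}\mathbf{U}$ already consumed; (ii) symmetrically, saturate the $\mathbf{G}$-side with wobble and $\mathbf{C}\text{-}\mathbf{G}$, giving $\min\{C,G\}+U$. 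A short case analysis (on whether $A\lessgtr U$ and whether $C\lessgtr G$, using $A+U+C+G=n$) then shows the LP optimum is the minimum of these two, which is exactly the $\bar l_0$ in the statement. Finally, uniqueness of the optimizing value is automatic since it is a single integer; the phrasing in the statement that the solution is ``unique'' refers, as in Proposition~\ref{P:wc}, to the optimal objective value, and I would phrase it that way.

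The main obstacle is not conceptual but bookkeeping: one must be careful that the $\mathbf{G}$ supply feeds both the $\mathbf{C}\text{-}\mathbf{G}$ pairs and the $\mathbf{U}\text{-}\mathbf{G}$ pairs, so the naive ``pair up scarcer-with-more-abundant'' heuristic from the Watson–Crick case does not directly apply; the correct bound is obtained by treating $w$ as the shared slack between the two coupling inequalities and optimizing over how it is split. I would present the LP explicitly, prove the upper bound $a+c+w\le \bar l_0$ by adding the two coupling inequalities appropriately (e.g.\ $a+c+w\le (a+w)+c\le U+G$ and $a+c+w\le a+(c+w)\le A\!-\!(\text{...})$, etc., taking the sharper of the resulting bounds), and then exhibit explicit integer $(a,c,w)$ achieving it; the matching lower bound is what forces the two $\min$-terms. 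Everything else—the reduction to counting structures with $l\le\bar l_0$ arcs, and monotonicity in $l$—is verbatim the argument of Proposition~\ref{P:wc}.
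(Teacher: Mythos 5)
Your proposal is correct and follows essentially the same route as the paper: reduce the existence of a compatible sequence to the feasibility of the integer linear program with variables counting \textbf{A-U}, \textbf{C-G} and \textbf{U-G} arcs under the four supply constraints, observe downward monotonicity in $l$, and identify the optimum of that program with $\bar{l}_0$. The only difference is that you propose to actually verify the optimal value (upper bound by combining the coupling inequalities, plus an explicit achieving assignment), a step the paper simply asserts; your two candidate configurations and the resulting $\min\{\min\{A,U\}+G,\ \min\{C,G\}+U\}$ are the correct value.
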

\begin{proof}
	We first prove that any structures having $l$ arcs are counted by
	$\bar{s}_{\mathbf{p},\lambda}^{[r]}(n,l)$ if $l\le \bar{l}_0$.

	Suppose we have $l_1$ \textbf{A-U} pairs, $l_2$ \textbf{C-G}
	pairs and $l_3$ \textbf{U-G} pairs. In order to be counted by $\bar{s}_{\mathbf{p},\lambda}^{[r]}(n,l)$, the number of
	\textbf{U} is at least $l_1+l_3$, i.e.~we have, $l_1+l_3\le \lfloor p_2 n\rfloor$ and since we have
	$\lfloor p_1 n\rfloor$ \textbf{A} nucleotides $l_1 \le \lfloor p_1 n\rfloor$.
	The situation for \textbf{C} and \textbf{G} is analogous.
	As long as $l_1$, $l_2$ and $l_3$ satisfy these conditions, we can always construct a compatible
	$\mathbf{p}$-sequence for a structure $S$ having $l=l_1+l_2+l_3$ arcs.
	Clearly, this holds for any $l'<l$, whence a structure having $l$ arcs is counted by
	$\bar{s}_{\mathbf{p},\lambda}^{[r]}(n,l)$ if the number of arcs  is not greater than the maximum of
	$l_1+l_2+l_3$ subject to the constraints of the following integer linear programming problem:
	\begin{align*}
	\text{maximize: }\quad &l=l_1+l_2+l_3\\
	\text{subject to }\quad & l_1 \leq \lfloor p_1 n\rfloor,\, l_1+l_3 \leq \lfloor p_2 n\rfloor,\, l_2 \leq \lfloor p_3 n\rfloor,\, l_2+l_3
        \leq n-(\lfloor p_1 n\rfloor+\lfloor p_2 n\rfloor+\lfloor p_3 n \rfloor)\\
	\text{and }\quad & l_1,l_2,l_3 \geq 0,\,l_1,l_2,l_3 \in \mathbb{Z}.
	\end{align*}
	Its unique solution is
	\begin{eqnarray*}
		\bar{l}_0 & = & \min\{\min\{\lfloor p_1 n\rfloor ,\lfloor p_2 n\rfloor \}+
		n-(\lfloor p_1 n\rfloor+\lfloor p_2 n\rfloor+\lfloor p_3 n \rfloor),\\
		&&  \qquad\   \min\{\lfloor p_3 n\rfloor  ,n-(\lfloor p_1 n\rfloor+\lfloor p_2 n\rfloor+\lfloor p_3 n \rfloor) \}
		+\lfloor p_2 n\rfloor \}
	\end{eqnarray*}
	and in case of $l> \bar{l}_0$, we observe as in Proposition~\ref{P:wc}, that we have not enough of at least one of
	the four nucleotide types, whence $\bar{s}_{\mathbf{p},\lambda}^{[r]}(n,l)= 0$ and the proposition follows.
\end{proof}

As a result we derive
\begin{theorem}\label{T:wcw}
	For $1\leq \lambda \leq 4$, $1\leq r \leq 3$, $n\in\mathbb{N}$ and $\mathbf{p}$, let
	$\bar{p}_0=\min(\min(p_1 ,p_2 )+p_4,\min(p_3 ,p_4 )+p_2)$. Then we have
	\begin{equation}\label{Eq:estimate3}
	\frac{\bar{s}_{\mathbf{p},\lambda}^{[r]}(n)}{s_\lambda^{[r]}(n)}=
	\frac{1}{\sqrt{2\pi}}\int_{-\infty}^{\frac{(\bar{p}_0-\mu_\lambda^{[r]})\sqrt{n}}{\sigma_\lambda^{[r]}}}\,
	e^{-\frac{1}{2}t^2} \mathrm{d} t + O(n^{-\frac{1}{2}}),
	\end{equation}
	where $\mu_\lambda^{[r]},\sigma_\lambda^{[r]}$ are the mean and  standard deviation of $\mathbb{Y}_{\lambda,n}^{[r]}$.

        Equivalently, 
	for $\bar{p}_0\in]\mu_\lambda^{[r]},\frac{1}{2}]$,  a random structure a.a.s. has a compatible
        sequence with nucleotide ratio $p$. Conversely, in case of $\bar{p}_0\in]0,\mu_\lambda^{[r]}[$,
            a.a.s. no random structure has a compatible sequence.	
\end{theorem}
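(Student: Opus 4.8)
The plan is to follow the template of the proofs of Theorems~\ref{T:ppclt} and \ref{T:wc}; the combinatorial content peculiar to the wobble case --- the cutoff $\bar{l}_0$ produced by the three-pair-type integer linear program --- has already been isolated in the Proposition above. First I would combine eqs.~(\ref{Eq:pp3}) and (\ref{Eq:ppsum3}) to write
\[
\frac{\bar{s}_{\mathbf{p},\lambda}^{[r]}(n)}{s_\lambda^{[r]}(n)}
=\sum_{l=0}^{\bar{l}_0}\frac{s_\lambda^{[r]}(n,l)}{s_\lambda^{[r]}(n)}
=\mathbb{P}\big(\mathbb{Y}_{\lambda,n}^{[r]}\le\bar{l}_0\big),
\]
so that the fraction of structures admitting a compatible $\mathbf{p}$-sequence is literally a value of the cumulative distribution function of the arc-number variable $\mathbb{Y}_{\lambda,n}^{[r]}$.

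Next I would invoke the central limit theorem, Theorem~\ref{T:arcclt}: substituting $x=(\bar{l}_0-\mu_\lambda^{[r]}n)/(\sigma_\lambda^{[r]}\sqrt{n})$ into eq.~(\ref{Eq:sclt}) gives
\[
\mathbb{P}\big(\mathbb{Y}_{\lambda,n}^{[r]}\le\bar{l}_0\big)
=\frac{1}{\sqrt{2\pi}}\int_{-\infty}^{\frac{\bar{l}_0-\mu_\lambda^{[r]}n}{\sigma_\lambda^{[r]}\sqrt{n}}}e^{-\frac{1}{2}t^2}\,\mathrm{d}t+O(n^{-\frac{1}{2}}),
\]
and it remains to pass from $\bar{l}_0$ to $\bar{p}_0 n$ in the upper limit. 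The key observation is that $\bar{p}_0 n$ is obtained from the explicit formula for $\bar{l}_0$ by replacing each $\lfloor p_i n\rfloor$ by $p_i n$ and $n-\lfloor p_1 n\rfloor-\lfloor p_2 n\rfloor-\lfloor p_3 n\rfloor$ by $p_4 n$; since only finitely many such roundings enter, nested inside minima and sums, one gets $|\bar{l}_0-\bar{p}_0 n|\le C$ for an absolute constant $C$. Consequently the two upper limits of integration differ by $O(n^{-\frac{1}{2}})$, and bounding the Gaussian density by $1$ on the short interval between them shows the two integrals differ by $O(n^{-\frac{1}{2}})$ as well, exactly as in the proof of Theorem~\ref{T:ppclt}. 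This establishes eq.~(\ref{Eq:estimate3}).

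Finally, for the a.a.s.\ statements I would first remark that $\bar{p}_0\le\frac{1}{2}$ automatically, since a structure on $n$ vertices has no base triples, hence carries at most $\lfloor n/2\rfloor$ arcs, so $\bar{l}_0\le\lfloor n/2\rfloor$ and $\bar{p}_0=\lim_{n\to\infty}\bar{l}_0/n\le\frac{1}{2}$. If $\bar{p}_0\in]\mu_\lambda^{[r]},\frac{1}{2}]$, then with $\epsilon=\bar{p}_0-\mu_\lambda^{[r]}>0$ the upper limit in eq.~(\ref{Eq:estimate3}) equals $\epsilon\sqrt{n}/\sigma_\lambda^{[r]}\to+\infty$, whence the ratio tends to $1$; if $\bar{p}_0\in]0,\mu_\lambda^{[r]}[$ the upper limit tends to $-\infty$ and the ratio tends to $0$. (The borderline case $\bar{p}_0=\mu_\lambda^{[r]}$, where the ratio tends to $\frac{1}{2}$, is excluded by the open intervals.) The only point requiring attention is the bound $|\bar{l}_0-\bar{p}_0 n|=O(1)$ tying the Proposition's combinatorial cutoff to the limiting ratio $\bar{p}_0$; all analytic input is imported verbatim from Theorem~\ref{T:arcclt}, so there is no genuine new obstacle.
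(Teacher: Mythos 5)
Your proposal is correct and follows essentially the same route as the paper, which simply declares the proof analogous to that of Theorem~\ref{T:wc} with the sole modification $|\bar{l}_0-\bar{p}_0 n|\le 2$ --- precisely the rounding bound you identify as the key point. Your write-up merely makes explicit the steps (reduction to $\mathbb{P}(\mathbb{Y}_{\lambda,n}^{[r]}\le\bar{l}_0)$, application of Theorem~\ref{T:arcclt}, and the shift of the integration limit) that the paper imports from the earlier proofs.
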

\begin{proof} 
	The proof is analogous to that of Theorem~\ref{T:wc}, the only difference being that
	\[
	| \bar{l}_0 -\bar{p}_0 n| \leq 2.
	\] 
\end{proof}

In Fig. 1 from Supplementary Material, we display $\frac{\bar{s}_{\mathbf{p},\lambda}^{[r]}(n)}{s_\lambda^{[r]}(n)}$ as a function of
$p_1$ and $p_3$, employing the average ${\bf AUCG}$-ratios found in RNA databases. We contrast the cases of
Watson-Crick and Watson-Crick together with wobble base pairs.



\section{Discussion}\label{S:Dis}

%

In this section we discuss our findings in the context of nucleotide percentages observed in RNA databases \citep{Berman:00}
and provide some implications of our results. Using the data from the RCSB PDB database~\citep{Berman:00}, we list
in Table~\ref{Tab:AUCG} the average nucleotide ratios for several classes of RNA and the corresponding values of
$p_0$ and $\bar{p}_0$ (see Theorem~\ref{T:wc} and Theorem~\ref{T:wcw}).

The observed average \textbf{AUCG}-ratio of all RNA families is $(0.208,0.200,0.271,0.321)$ facilitating
according to Theorem~\ref{T:wc} and Theorem~\ref{T:wcw} the formation of asymptotically almost all structures,
see Fig. 1 from Supplementary Material.

\begin{table}
	\caption{The average nucleotide ratios and the values of $p_0, \bar{p}_0$ for different families of RNA structures.
	The theoretical value with whom  $p_0$ and $\bar{p}_0$ have to be compared is $\mu_4^{[3]}=0.3113$.} \label{Tab:AUCG}
	\begin{tabular}{cccccccc}
		\hline\noalign{\smallskip}
		\small     &\small\textbf{A}$$                    & \small\textbf{U}$$       & \small\textbf{C}$$        & \small\textbf{G}$$	&\small$\sigma^2$	&\small$p_0$	&\small$\bar{p}_0$           \\
		\noalign{\smallskip}\hline\noalign{\smallskip}
		\small\text{mRNA}  &\small$0.412$               & \small$0.265$ & \small $0.110$ &\small $0.213$	&\small$0.092$	&\small$0.375$	&\small$0.375$   \\
		\small\text{tRNA}  &\small$0.189$               & \small$0.201$ & \small $0.292$ &\small $0.318$	&\small$0.004$  	&\small$0.481$	&\small$0.493$   \\		
		\small$5$\text{S ribosomal RNA}  &\small$0.208$               & \small$0.165$ & \small $0.304$ &\small $0.323$	&\small$0.006$	&\small$0.470$	&\small$0.470$   \\
		\small$16$\text{S ribosomal RNA}  &\small$0.214$               & \small$0.165$ & \small $0.268$ &\small $0.353$	&\small$0.004$	&\small$0.433$	&\small$0.433$   \\
		\small$23$\text{S ribosomal RNA}  &\small$0.246$               & \small$0.180$ & \small $0.244$ &\small $0.330$	& \small$0.006$	&\small$0.424$	&\small$0.424$   \\
		\small\text{Bacteria}  &\small$0.238$               & \small$0.197$ & \small $0.247$ &\small $0.317$   	&\small$0.010$	&\small$0.444$	&\small$0.444$\\
		\small\text{Eukaryota}  &\small$0.232$               & \small$0.237$ & \small $0.238$ &\small $0.293$	&\small$0.021$	&\small$0.470$	&\small$0.475$   \\		\
		\small\text{Viruses}  &\small$0.182$               & \small$0.205$ & \small $0.298$ &\small $0.315$	&\small$0.015$	&\small$0.480$	&\small$0.497$   \\	
		\small\text{RNA}  &\small$0.208$               & \small$0.200$ & \small $0.271$ &\small $0.321$	&\small$0.052$	&\small$0.471$	&\small$0.471$   \\				
		\noalign{\smallskip}\hline
	\end{tabular}
\end{table}

Table~\ref{Tab:AUCG} shows, that the average ratio for mRNAs is significantly different from that of other families and
can thus be used as a discriminat for the mRNAs that can easily be obtained at the time of sequencing.
With a ratio of {\bf A} being $0.412$ and a $2\;:\;1$ ratio of {\bf G} to {\bf C} the mRNA family exhibits particular
nucleotide ratios making it it more difficult to form configurations having low free energies. For mRNA, $p_0$ and $\bar{p}_0$
are relatively close to the critical value $\mu_4^{[3]}=0.3113$ which motivates to look in more detail at mRNA structures,
in particular in the context of mRNA redesign with the objective to achieve a mfe-structure without affecting the induced amino
acid sequence~\citep{Gaspar:13}. \citet{Buratti:04} formulates the hypothesis that pre-mRNA molecules behaved essentially as a
linear structure, or ``tape'' so to speak. In particular, our results show that  pre-mRNAs with $p_0<\mu_4^{[3]}$ almost surely have a linear
structure.

We observe that for mRNAs whose critical values $p_0$ are greater than $\mu_4^{[3]}$, fold. For instance, mRNA HP210~\citep{Mahen:10}
with $p_0=\bar{p}_0=0.45$ folds into several distinct confirmations and interchanges them quickly (in vivo).

mRNA sequences with $p_0<\mu_4^{[3]}$, have a diminished capability of folding and typically interact with {other}
biopolymers: for instance, the two synthetic messenger RNAs
{\tiny
$$
5'\textrm{-} \textbf{UUUUUUUUUUUUUUUUUGGCAAGGAGGUUUUUUUUUUUUUUUUUUUUUUU}\textrm{-} 3',
$$}
having $p_0=0.08$, $\bar{p}_0=0.18$ and
{\tiny
$$
  5'\textrm{-} \textbf{GGCAAGGAGGUAAAAAUGAAAAAAAAA}\textrm{-} 3',
  $$}
having $p_0=\bar{p}_0=0.11$ are used to study the interaction of mRNA with the ribosome at different states of translation \citep{Yusupova:06}.
During the translation initiation these mRNAs combine with 16s ribosomal RNA, forming ribosome complexes, see Fig.~\ref{F:mRNA}.

 \begin{figure}
 	\centering
 	\includegraphics[width=1\textwidth]{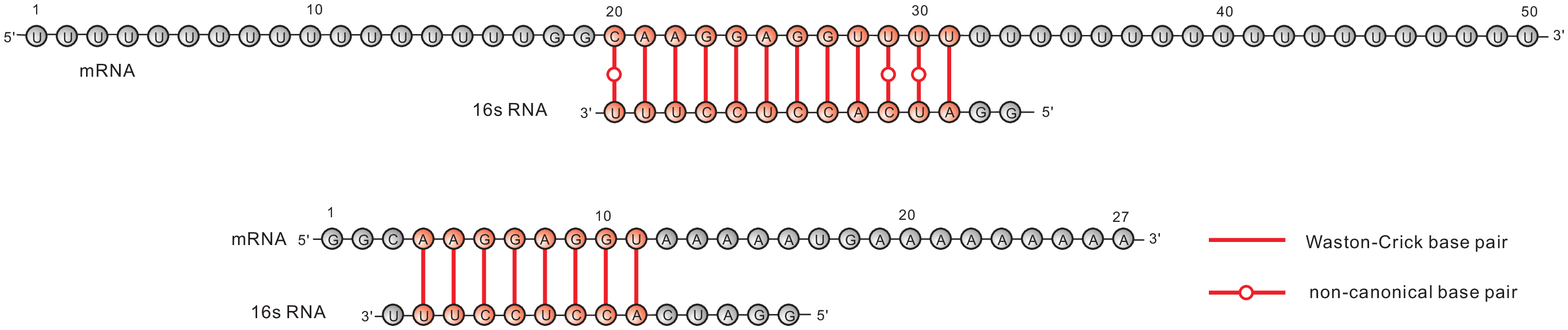}
 	\caption
 	{\small Interactions between mRNAs and 16s ribosomal RNA.
 	}
 	\label{F:mRNA}
 \end{figure}
A second instance of such interactions is reported in \citet{Rozov:15}. The messenger RNA
 {\tiny
   $$
   5'\textrm{-} \textbf{GGCAAGGAGGUAAAAAUGAACAAAAAAAAA}\textrm{-} 3',
   $$}
   ($p_0=\bar{p}_0=0.133$), pairs, as a result of a {\bf G-U} mismatch, with an incorrect tRNA and as a result produces an incorrect
 amino acid.

 Of course nucleotide ratios are a rather coarse criterion: within sequences of a fixed nucleotide ratio structural conformations
 depend on additional factors: \citet{Seffens:99} showed that mRNAs from databases have lower mfe than random RNA sequences of the
 same nucleotide ratio. Since mfe critically depends on the stacking of base pairs and loops, \citet{Workman:99} was able to
 show that random RNA sequences, generated with the same dinucleotide ratio, have not significantly higher mfe values.
 Accordingly, the dinucleotide ratio is the adaequate equivalence concept when studying mfe configurations.

 Irrespective of RNA family, if for an RNA sequence $p_0$ and $\bar{p}_0$ are smaller than $\mu_4^{[3]}=0.3113$, it is likely that
 it interacts with other biopolymers. In the following we list several instances of this phenomenon, see Fig.~\ref{F:interact},
\begin{enumerate}
\item RNAs found in the bluetongue virus~\citep{Diprose:02}, having three distinct segments of $412$, $276$, and $265$ bp,
  each of which comprised of two chains of \textbf{A}-sequences and \textbf{U}-sequences ($p_0=\bar{p}_0=0$) binding
  to each other by \textbf{A-U} base pairs;
\item the consensus sequence {\tiny $5'\textrm{-}\textbf{UACUAACACC}\textrm{-}3'$}  ($p_0=\bar{p}_0=0.2$) of the precursor
  (pre)－mRNA intron, interacting with the short region {\tiny $5'\textrm{-} \textbf{GGUGUAGUA}\textrm{-}3'$} ($p_0=0.222,\bar{p}_0=0.333$)
  of the U2 small nuclear (sn)RNA and forming a complementary helix of seven base pairs with a single, unpaired
  \textbf{A}-residue~\citep{Newby:02};
\item the secondary structure of the minimal, hinged hairpin ribozyme, formed by the interaction between the interdomain linker
      strand
      {\tiny
        $$
        5'\textrm{-}\textbf{CGGUGAGAAGGGXGGCAGAGAAACACACGA}\textrm{-}3',
        $$}
      ($p_0=\bar{p}_0=0.2$) and two other strands~\citep{Macelrevey:08}.
\end{enumerate}
We display the above three examples in Fig.~\ref{F:interact}.

  \begin{figure*}
  	\begin{tabular}{ccc}
 		\includegraphics[width=0.30\textwidth]{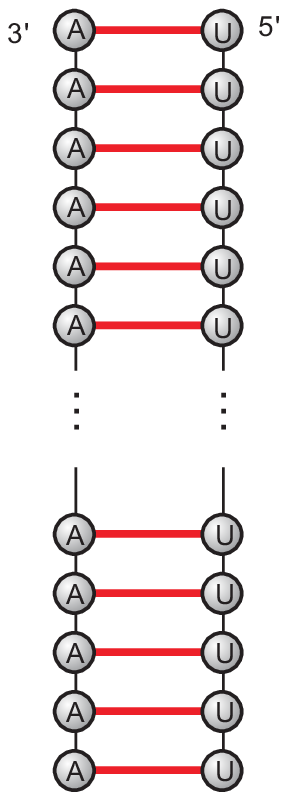}&
  		\includegraphics[width=0.25\textwidth]{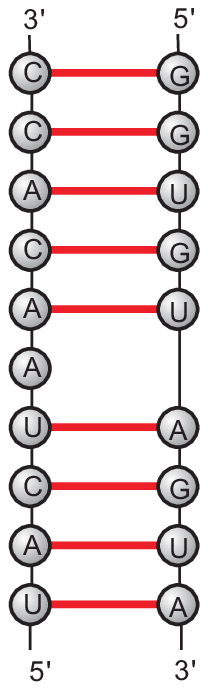}&
  		\includegraphics[width=0.35\textwidth]{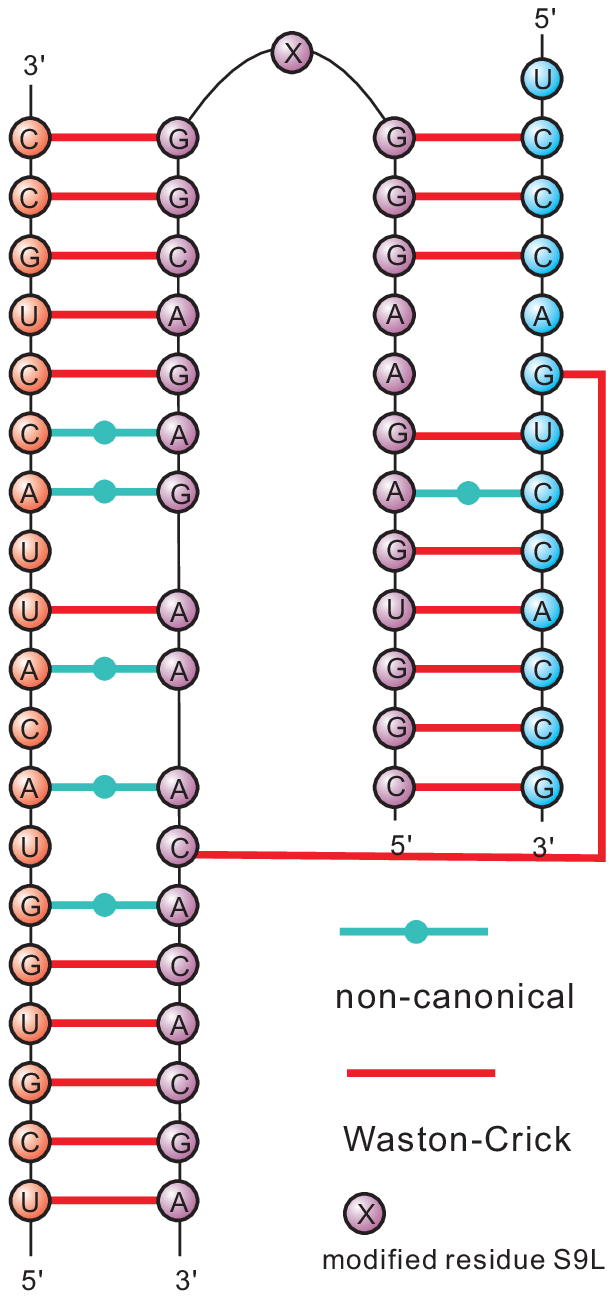}\\
  	\end{tabular}
  	\caption{LHS: the double-stranded structure~\citep{Diprose:02} comprised of an \textbf{A}-sequence and an \textbf{U}-sequence.
          M: the complementary helix~\citep{Newby:02} formed by the (pre)mRNA intron and U2 small nuclear (sn)RNA.
          RHS: the minimal, hinged hairpin ribozyme~\citep{Macelrevey:08} consisting of three interacting strands.
  	} \label{F:interact}
  \end{figure*}

The mathematical analysis presented here is currently extended to topological RNA structures, i.e.~RNA structures having pseudoknots.
These RNA structures are filtered by the topological genus of their corresponding fatgraphs. In this framework the concept of diagrams
is enriched by passing from graphs the fatgraphs. This is achieved by ordering the edges around a vertex and the secondary structures
discussed here are just topological RNA structures of genus zero.

\begin{center}
	{\bf ACKNOWLEDGMENTS}
\end{center}

We would like to thank Fenix W.D.~Huang for discussions and for providing the data on the
tRNA structures.

\begin{center}
	{\bf AUTHOR DISCLOSURE STATEMENT}
\end{center}

The authors declare that no competing financial interests exist.


\bibliographystyle{plainnat}      


\begin{thebibliography}{32}
	\providecommand{\natexlab}[1]{#1}
	\providecommand{\url}[1]{\texttt{#1}}
	\expandafter\ifx\csname urlstyle\endcsname\relax
	\providecommand{\doi}[1]{doi: #1}\else
	\providecommand{\doi}{doi: \begingroup \urlstyle{rm}\Url}\fi
	
	\bibitem[Bender(1973)]{Bender:73}
	E.A. Bender.
	\newblock Central and local limit theorems applied to asymptotic enumeration.
	\newblock \emph{J. Combin. Theory A}, 15:\penalty0 91--111, 1973.
	
	\bibitem[Berman et~al.(2000)Berman, Westbrook, Feng, Gilliland, Bhat, Weissig,
	Shindyalov, and Bourne]{Berman:00}
	H.M. Berman, J.~Westbrook, Z.~Feng, G.~Gilliland, T.N. Bhat, H.~Weissig, I.N.
	Shindyalov, and P.E. Bourne.
	\newblock {The} {Protein} {Data} {Bank}.
	\newblock \emph{Nucleic Acids Res.}, 28:\penalty0 235--242, 2000.
	
	\bibitem[Buratti and Baralle(2004)]{Buratti:04}
	E.~Buratti and F.E. Baralle.
	\newblock Influence of {RNA} {Secondary} {Structure} on the {Pre}-{mRNA}
	{Splicing} {Process}.
	\newblock \emph{Mol. Cell. Biol.}, 24\penalty0 (24):\penalty0 10505--10514,
	2004.
	
	\bibitem[Diprose et~al.(2002)Diprose, Grimes, Sutton, Burroughs, Meyer, Maan,
	Mertens, and Stuart]{Diprose:02}
	J.M. Diprose, J.M. Grimes, G.C. Sutton, J.N. Burroughs, A.~Meyer, S.~Maan,
	P.P.C. Mertens, and D.I. Stuart.
	\newblock The {Core} of {Bluetongue} {Virus} {Binds} {Double}-{Stranded} {RNA}.
	\newblock \emph{J. Virol.}, 76\penalty0 (18):\penalty0 9533--9536, 2002.
	
	\bibitem[Flajolet and Sedgewick(2009)]{Flajolet:07a}
	P.~Flajolet and R.~Sedgewick.
	\newblock \emph{Analytic Combinatorics}.
	\newblock Cambridge University Press New York, 2009.
	
	\bibitem[Fontana et~al.(1993)Fontana, Konings, Stadler, and
	Schuster]{Fontana:93}
	W.~Fontana, D.A.M. Konings, P.F. Stadler, and P.~Schuster.
	\newblock Statistics of {RNA} secondary structures.
	\newblock \emph{Biopolymers}, 33\penalty0 (9):\penalty0 1389--1404, 1993.
	
	\bibitem[Gaspar et~al.(2013)Gaspar, Moura, Santos, and Oliveira]{Gaspar:13}
	P.~Gaspar, G.~Moura, M.A.S. Santos, and J.L. Oliveira.
	\newblock {mRNA} secondary structure optimization using a correlated stem-loop
	prediction.
	\newblock \emph{Nucleic Acids Res.}, page gks1473, 2013.
	
	\bibitem[Hille(1962)]{Hille:62}
	E.~Hille.
	\newblock \emph{Analytic Function Theory}, volume Volume II.
	\newblock Chelsea Publishing Company, 1962.
	
	\bibitem[Hofacker et~al.(1998)Hofacker, Schuster, and Stadler]{Hofacker:98}
	I.L. Hofacker, P.~Schuster, and P.F. Stadler.
	\newblock Combinatorics of {RNA} secondary structures.
	\newblock \emph{Discrete Appl. Math.}, 88\penalty0 (1--3):\penalty0 207--237,
	1998.
	
	\bibitem[Howell et~al.(1980)Howell, Smith, and Waterman]{Howell:80}
	J.~Howell, T.~Smith, and M.~Waterman.
	\newblock Computation of {Generating} {Functions} for {Biological} {Molecules}.
	\newblock \emph{SIAM J. Appl. Math.}, 39\penalty0 (1):\penalty0 119--133, 1980.
	
	\bibitem[Hunter and Sanders(1990)]{Hunter:90}
	C.A. Hunter and J.K.M. Sanders.
	\newblock The nature of $\pi$-$\pi$ interactions.
	\newblock \emph{J. Am. Chem. Soc.}, 112\penalty0 (14):\penalty0 5525--5534,
	1990.
	
	\bibitem[Jin and Reidys(2008)]{Jin-Reidys}
	E.Y. Jin and C.M. Reidys.
	\newblock Central and local limit theorems for {RNA} structures.
	\newblock \emph{J. Theor. Biol.}, 250\penalty0 (3):\penalty0 547--559, 2008.
	
	\bibitem[Lorenz et~al.(2011)Lorenz, Bernhart, H\"{o}ner~zu Siederdissen, Tafer,
	Flamm, Stadler, and Hofacker]{Lorenz:11}
	R.~Lorenz, S.H. Bernhart, C.~H\"{o}ner~zu Siederdissen, H.~Tafer, C.~Flamm,
	P.F. Stadler, and I.L. Hofacker.
	\newblock {ViennaRNA} {Package} 2.0.
	\newblock \emph{Algorithms Mol. Biol.}, 6:\penalty0 26, 2011.
	
	\bibitem[MacElrevey et~al.(2008)MacElrevey, Salter, Krucinska, and
	Wedekind]{Macelrevey:08}
	C.~MacElrevey, J.D. Salter, J.~Krucinska, and J.E. Wedekind.
	\newblock Structural effects of nucleobase variations at key active site
	residue {Ade}38 in the hairpin ribozyme.
	\newblock \emph{RNA}, 14\penalty0 (8):\penalty0 1600--1616, 2008.
	
	\bibitem[Mahen et~al.(2010)Mahen, Watson, Cottrell, and Fedor]{Mahen:10}
	E.M. Mahen, P.Y. Watson, J.W. Cottrell, and M.J. Fedor.
	\newblock {mRNA} {Secondary} {Structures} {Fold} {Sequentially} {But}
	{Exchange} {Rapidly} {In} {Vivo}.
	\newblock \emph{PLoS Biol.}, 8\penalty0 (2):\penalty0 e1000307, 2010.
	
	\bibitem[McCaskill(1990)]{McCaskill}
	J.S. McCaskill.
	\newblock The equilibrium partition function and base pair binding
	probabilities for {RNA} secondary structure.
	\newblock \emph{Biopolymers}, 29:\penalty0 1105--1119, 1990.
	
	\bibitem[Newby and Greenbaum(2002)]{Newby:02}
	M.I. Newby and N.L. Greenbaum.
	\newblock Sculpting of the spliceosomal branch site recognition motif by a
	conserved pseudouridine.
	\newblock \emph{Nature Struct. Biol.}, 9\penalty0 (12):\penalty0 958--965,
	2002.
	
	\bibitem[Penner and Waterman(1993)]{Waterman:93}
	R.C. Penner and M.S. Waterman.
	\newblock Spaces of {RNA} secondary structures.
	\newblock \emph{Adv. Math.}, 217:\penalty0 31--49, 1993.
	
	\bibitem[Reidys et~al.(1997)Reidys, Stadler, and Schuster]{Reidys:97}
	C.~Reidys, P.F. Stadler, and P.~Schuster.
	\newblock Generic properties of combinatory maps: neutral networks of {RNA}
	secondary structures.
	\newblock \emph{Bull. Math. Biol.}, 59\penalty0 (2):\penalty0 339--397, 1997.
	
	\bibitem[Rozov et~al.(2015)Rozov, Demeshkina, Westhof, Yusupov, and
	Yusupova]{Rozov:15}
	A.~Rozov, N.~Demeshkina, E.~Westhof, M.~Yusupov, and G.~Yusupova.
	\newblock Structural insights into the translational infidelity mechanism.
	\newblock \emph{Nat. Commun.}, 6, 2015.
	
	\bibitem[Schmitt and Waterman(1994)]{Waterman:94a}
	W.~Schmitt and M.~Waterman.
	\newblock Linear trees and \textsc{RNA} secondary structure.
	\newblock \emph{Disc. Appl. Math.}, 51:\penalty0 317--323, 1994.
	
	\bibitem[Seffens and Digby(1999)]{Seffens:99}
	W.~Seffens and D.~Digby.
	\newblock {mRNAs} have greater negative folding free energies than shuffled or
	codon choice randomized sequences.
	\newblock \emph{Nucleic Acids Res.}, 27\penalty0 (7):\penalty0 1578--1584,
	1999.
	
	\bibitem[Smith and Waterman(1978)]{Waterman:78aa}
	T.~F. Smith and M.~S. Waterman.
	\newblock {RNA} secondary structure.
	\newblock \emph{Math. Biol.}, 42:\penalty0 31--49, 1978.
	
	\bibitem[Stanley(2001)]{stanley:2001}
	R.~P. Stanley.
	\newblock \emph{Enumerative Combinatorics: Volume 2}.
	\newblock Cambridge University Press, 2001.
	
	\bibitem[Stein and Waterman(1979)]{Stein:79}
	P.R. Stein and M.S. Waterman.
	\newblock On some new sequences generalizing the {Catalan} and {Motzkin}
	numbers.
	\newblock \emph{Discrete Math.}, 26\penalty0 (3):\penalty0 261--272, 1979.
	
	\bibitem[Titchmarsh(1939)]{Tichmarsh:39}
	E.~Titchmarsh.
	\newblock \emph{The theory of functions}.
	\newblock Oxford Uninversity Press, Oxford UK, 1939.
	
	\bibitem[\v{S}poner et~al.(2001)\v{S}poner, Leszczynski, and Hobza]{Sponer:01}
	J.~\v{S}poner, J.~Leszczynski, and P.~Hobza.
	\newblock Electronic properties, hydrogen bonding, stacking, and cation binding
	of {DNA} and {RNA} bases.
	\newblock \emph{Biopolymers}, 61\penalty0 (1):\penalty0 3--31, 2001.
	
	\bibitem[\v{S}poner et~al.(2013)\v{S}poner, Sponer, Ml\'{a}dek, Jure\v{c}ka,
	Ban\'{a}\v{s}, and Otyepka]{Sponer:13}
	J.~\v{S}poner, J.E. Sponer, A.~Ml\'{a}dek, P.~Jure\v{c}ka, P.~Ban\'{a}\v{s},
	and M.~Otyepka.
	\newblock Nature and magnitude of aromatic base stacking in {DNA} and {RNA}:
	{Quantum} chemistry, molecular mechanics, and experiment.
	\newblock \emph{Biopolymers}, 99\penalty0 (12):\penalty0 978--988, 2013.
	
	\bibitem[Waterman(1978)]{Waterman:78s}
	M.S. Waterman.
	\newblock Secondary structure of single-stranded nucleic acids.
	\newblock In \emph{Studies on foundations and combinatorics, Advances in
		mathematics supplementary studies}, volume~1, pages 167--212. Academic Press
	N.Y., 1978.
	
	\bibitem[Waterman(1979)]{Waterman:79a}
	M.S. Waterman.
	\newblock Combinatorics of {RNA} {Hairpins} and {Cloverleaves}.
	\newblock \emph{Stud. Appl. Math.}, 60\penalty0 (2):\penalty0 91--98, 1979.
	
	\bibitem[Workman and Krogh(1999)]{Workman:99}
	C.~Workman and A.~Krogh.
	\newblock No evidence that {mRNAs} have lower folding free energies than random
	sequences with the same dinucleotide distribution.
	\newblock \emph{Nucleic Acids Res.}, 27\penalty0 (24):\penalty0 4816--4822,
	1999.
	
	\bibitem[Yusupova et~al.(2006)Yusupova, Jenner, Rees, Moras, and
	Yusupov]{Yusupova:06}
	G.~Yusupova, L.~Jenner, B.~Rees, D.~Moras, and M.~Yusupov.
	\newblock Structural basis for messenger {RNA} movement on the ribosome.
	\newblock \emph{Nature}, 444\penalty0 (7117):\penalty0 391--394, 2006.
	
\end{thebibliography}

\end{document}